\documentclass[reqno]{amsart}
\usepackage{txfonts}
\usepackage{amsfonts}
\usepackage{amssymb}
\usepackage[bookmarksopen,colorlinks,citecolor=blue,
linkcolor=black,pdfstartview=FitH]{hyperref}
\usepackage[mathscr]{eucal}
\usepackage{amsmath}
\usepackage{enumitem}

\newtheorem{theorem}{Theorem}[section]
\newtheorem{corollary}[theorem]{Corollary}
\newtheorem{lemma}[theorem]{Lemma}
\newtheorem{proposition}[theorem]{Proposition}

\theoremstyle{definition}

\theoremstyle{remark}
\newtheorem{remark}[theorem]{Remark}

\numberwithin{equation}{section}

\allowdisplaybreaks[4]

\begin{document}

\title{A class of generalized positive linear maps on matrix algebras}
\author{Xin Li}
\address{Department of Mathematics, East China Normal University, Shanghai 200241, China}
\email{tsoete@163.com}
\author{Wei Wu}
\address{Department of Mathematics, East China Normal University, Shanghai 200241, China}
\email{wwu@math.ecnu.edu.cn}
\thanks{Corresponding author: Wei Wu}
\thanks{The research was supported in part by Shanghai Leading Academic Discipline Project (Project No.
B407), and National Natural Science Foundation of China (Grant No. 11171109).}

\subjclass[2000]{Primary 46L05; Secondary 15A30}
\keywords{Symmetric group; $D$-type; Atomic map; Decomposable map; Completely positive; Structural physical approximation; Optimal entanglement witness.}

\begin{abstract}
We construct a class of positive linear maps on matrix algebras. We find conditions when these maps are atomic, decomposable and completely positive. We obtain a large class of atomic positive linear maps. As applications in quantum information theory, we discuss the structural physical approximation and optimality of entanglement witness associated with these maps.
\end{abstract}
\maketitle

\section{Introduction}\label{se:1}

Positive linear maps on $C^{*}-$algebras, particularly those of finite dimensions, have been becoming more important by their connection with quantum information theory. A linear map on a $C^{*}$-algebra is called {\it positive} if it sends the cone of positive elements into itself. Little is known about the global structure of positive linear maps, even in the low dimensional matrix algebras.
Let $M_{n}$ be the $C^{*}$-algebra of all $n\times n$ matrices over the complex field, and let $\mathcal{P}_{k}(M_n)$ (respectively, $\mathcal{P}^{k}(M_n)$) be the convex cone of all $k$-positive (respectively, $k$-copositive)
linear maps on $M_n$. One of the basic problems about the structures of the
positive cone $\mathcal{P}_{1}(M_n)$ is whether the set $\mathcal{P}_{1}(M_n)$ can be decomposed as the algebraic
sum of some simpler classes in $\mathcal{P}_{1}(M_n)$ \cite{Tomi}.
When $n=2$, it is well known \cite{Woronowicz} that
every positive linear map can be written as a sum of a completely positive linear map and a completely copositive linear map, that is, the maps in $\mathcal{P}_{1}(M_2)$ are decomposable. But this is not the case for higher dimensional matrix algebras. On $M_3$, Choi gave an extremal positive linear map which is indecomposable \cite{Choi7512}. Tanahashi and Tomiyama in \cite{Tomi} introduced the concept of atomic positive linear map which has a stronger indecomposability, and they showed that Choi's map is atomic. There are only a few examples of indecomposable positive linear maps in the literature, much less the atomic ones. Most known examples of indecomposable positive linear maps and atomic positive linear maps can be found in \cite{Chrus07, Chrus08, Chrus12, Ha03, Kye1992, Kye2012} and references therein. In quantum information theory, indecomposable positive linear maps can be used to detect entangled states whose partial transposes are positive and atomic positive linear maps can be used to detect states with the `weakest' entanglement \cite{Chrus08}. Positive linear maps also play an important role in the study of operator system theory \cite{PTT, LW}, etc.

In this paper, we give a generalization of linear maps defined in \cite{Ha03}. Let $S_{n}$ be the symmetric group consisting of all bijections (permutations) from the set $\{1, 2,\ldots, n\}$ onto
itself. For positive real numbers $a,c_{1},c_{2},\ldots,c_{n}$ and each $\sigma\in S_{n}$, we define a linear map $\Theta^{(n,\sigma)}[a;c_{1},c_{2},\ldots,c_{n}]$ from $M_{n}$ to $M_{n}$ by
\begin{align*}
\Theta^{(n,\sigma)}[a;c_{1},c_{2},\ldots,c_{n}](X)=
\Delta^{(n,\sigma)}[a;c_{1},c_{2},\ldots,c_{n}](X)-X,
\end{align*}
where
\begin{align*}
&\Delta^{(n,\sigma)}[a;c_1,c_{2}, \ldots ,c_n ](X)\\
=&\left(
      \begin{array}{cccc}
        a x_{11}+c_{1}x_{\sigma(1),\sigma(1)} & 0 & \cdots & 0  \\
        0 & a x_{22}+c_{2} x_{\sigma(2),\sigma(2)} & \cdots & 0 \\
      \vdots & \vdots &  \ddots & \vdots  \\
        0 & 0 & \cdots & a x_{nn}+c_{n} x_{\sigma(n),\sigma(n)} \\
      \end{array}\right),
\end{align*}
for each $X=(x_{ij})\in M_{n}$. Let $\phi:M_{n}\mapsto M_{n}$ be a linear map. If it has the form
\begin{align}\label{de:D-type}
\phi: (x_{ij})\mapsto \text{diag} (f_{1},\ldots,f_{n})-(x_{ij})\qquad \text{with}\quad (f_{1},\ldots,f_{n})=(x_{11},\ldots,x_{nn})D
\end{align}
where $D=(d_{ij})$ is an $n\times n$ nonnegative matrix, that is, all $d_{ij}\geq 0$, then $\phi$ is called a {\it $D$-type linear map} \cite{HouLi12}. In (\ref{de:D-type}), if we let
\[D=aI_{n}+\sum_{i=1}^{n}c_{i}E_{\sigma(i)i}\]
where $I_{n}$ and $\{E_{ij}\}_{i,j=1}^{n}$ are the identity matrix and the canonical matrix units of $M_{n}$, respectively, we can see that $\Theta^{(n,\sigma)}[a;c_{1},c_{2},\ldots,c_{n}]$ has the form in
(\ref{de:D-type}) and so it is a $D$-type linear map. Throughout this paper, if there is no confusion, $\Theta^{(n,\sigma)}[a;c_{1},c_{2},\ldots,c_{n}]$ and $\Delta^{(n,\sigma)}[a;c_1,c_{2},\ldots ,c_n]$ will often be abbreviated to $\Theta^{(n,\sigma)}$ and $\Delta^{(n,\sigma)}$, respectively.

For each $k\in \{1,2,\ldots,n\}$, we define $\tau_{k}^{n}\in S_{n}$ by
\begin{align}\label{eq:translation}
\tau_{k}^{n}(i)\equiv i+k\pmod n,
\end{align}
for $i=1,2,\ldots,n$. The linear map $\Theta^{(3,\tau_{2}^{3})}[a;c_{1},c_{2},c_{3}]$ was studied in \cite{Kye1992}. In \cite{Ha03}, Ha defined the map $\Theta^{(n,\tau_{n-1}^{n})}[a;c_{1},c_{2},\ldots,c_{n}]$ which is a generalization of $\Theta^{(3,\tau_{2}^{3})}[a;c_{1},c_{2},c_{3}]$ and gave a sufficient condition for the map $\Theta^{(n,\tau_{n-1}^{n})}[a;c_{1},c_{2},\ldots,c_{n}]$ being atomic. In \cite{Hou11}, Qi and Hou defined the map  $\Theta^{(n,\tau_{k}^{n})}[n-1;1,1,\ldots,1]$ and discussed when $\Theta^{(n,\tau_{k}^{n})}$ is positive and indecomposable.
In \cite{Hou12}, Qi and Hou studied the optimality, decomposability and structural physical approximation of $\Theta^{(n,\tau_{k}^{n})}[n-1;1,1,\ldots,1]$ for $k\neq n$. For each $\sigma\in S_{n}$ and $c\geq 0$, the positivity of $\Theta^{(n,\sigma)}[n-c;c,c,\ldots,c]$ was discussed in \cite{HouLi12}. For $\sigma^{2}=id_{n}$ where $id_{n}$ is the identity of $S_{n}$, the decomposability of $\Theta^{(n,\sigma)}[n-1;1,1,\ldots,1]$ was also discussed in \cite{HouLi12}. In \cite{Ha12}, Ha discussed the optimality of the entanglement witness associated with $\boldsymbol{\mathrm{T}}\circ \Theta^{(n,\tau_{k}^{n})}[n-1;1,1,\ldots,1]$ for $k\neq n$ and $\frac{n}{2}$ (when $n$ ($n\geq 3$) is even), where $\boldsymbol{\mathrm{T}}$ denotes the transpose map.

The paper is organized as follows. In Section \ref{se:p2p} we give conditions when the map $\Theta^{(n,\sigma)}[a;c_{1},c_{2},\ldots,c_{n}]$ is positive and discuss the equivalence between $2$-positivity and completely positivity. In Section \ref{se:atomic} we give conditions when $\Theta^{(n,\sigma)}[a;c_{1},c_{2},\ldots,c_{n}]$ is atomic and decomposable. We give conditions in Section \ref{se:spaopt} when the structural physical approximation of $\Theta^{(n,\sigma)}[a;c_{1},c_{2},\ldots,c_{n}]$ is separable and the entanglement witness associated with $\boldsymbol{\mathrm{T}}\circ \Theta^{(n,\sigma)}[a;c_{1},c_{2},\ldots,c_{n}]$ is optimal.

Throughout this paper, a matrix $A$ is positive means that $A$ is positive semi-definite and is denoted by $A \geq 0$. For every vector in $\mathbb{C}^{n}$, we consider it as an $n\times 1$ matrix, that is, a column vector. If $x$ is a vector or a matrix, then $x^{t}$ and $x^{*}$ denote the transpose and conjugate transpose of $x$, respectively. Let $\{e_{i}\}_{i=1}^{n}$ and $\{E_{ij}\}_{i,j=1}^{n}$ denote the canonical orthonormal basis of $\mathbb{C}^{n}$ and the matrix units of $M_{n}$, respectively. Let $\langle\cdot,\cdot\rangle$ be the usual inner product on $\mathbb{C}^{n}$ and $(n,k)$ denote the greatest common divisor of $n$ and $k$. For $m,n\in \mathbb{N}$, if $m$ divides $n$ we write $m|n$, and if $m$ does not divide $n$ we write $m\nmid n$. Let $\boldsymbol{\mathrm{T}}$ denote the transpose map on $M_{n}$ and $id_{n}$ denote the identity of $S_{n}$.

The authors are grateful to the referee for careful reading of the manuscript and several helpful comments.

\section{Positivity and 2-positivity}\label{se:p2p}

In this section, we give conditions when $\Theta^{(n,\sigma)}$ is positive and then discuss the equivalence between 2-positivity and completely positivity.

\begin{lemma}{\rm{(\cite{Ha03})}}\label{le:symf}
Let $a>0$. For symmetric function $$F(x_1,\ldots,x_n)=\sum_{m=0}^n\left[a^{m-1}(a-m)\sum_{1\leq i_1<\ldots<i_{n-m}\leq n}x_{i_1}\cdots x_{i_{n-m}}\right],$$
where $x_1,\ldots,x_n$ are positive real numbers, we have that $F(x_1,\ldots,x_n)\geq 0 $ if and only if $\sum_{i=1}^n (a+x_{i})^{-1}\leq 1$.
\end{lemma}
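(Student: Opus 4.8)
\emph{Proof proposal.} The plan is to recognize the symmetric function $F$ as $P(a)-P'(a)$, where $P(t)=\prod_{i=1}^{n}(t+x_{i})$, after which the stated equivalence drops out of the logarithmic derivative formula $P'(t)/P(t)=\sum_{i=1}^{n}(t+x_{i})^{-1}$.

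First I would observe that the inner sum $\sum_{1\le i_{1}<\cdots<i_{n-m}\le n}x_{i_{1}}\cdots x_{i_{n-m}}$ is the $(n-m)$-th elementary symmetric polynomial $E_{n-m}$ in $x_{1},\ldots,x_{n}$ (with $E_{0}=1$), so that $F=\sum_{m=0}^{n}a^{m-1}(a-m)E_{n-m}$. Expanding the product gives $P(t)=\prod_{i=1}^{n}(t+x_{i})=\sum_{m=0}^{n}E_{n-m}\,t^{m}$, i.e.\ the coefficient of $t^{m}$ is exactly the inner sum above. Differentiating in $t$ yields $P'(t)=\sum_{m=1}^{n}m\,E_{n-m}\,t^{m-1}$, hence
\[
P(t)-P'(t)=E_{n}+\sum_{m=1}^{n}E_{n-m}\bigl(t^{m}-m\,t^{m-1}\bigr)=\sum_{m=0}^{n}a^{m-1}(a-m)E_{n-m}\Big|_{a\to t},
\]
where the $m=0$ term is unambiguous because the factor $m\,t^{m-1}$ vanishes there (the formal $t^{-1}$ in $t^{m-1}(t-m)$ cancels). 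Evaluating at $t=a$ gives precisely $F(x_{1},\ldots,x_{n})=P(a)-P'(a)$.

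Next, since $a>0$ and each $x_{i}>0$ we have $P(a)>0$, so $F\ge 0$ if and only if $P'(a)/P(a)\le 1$. Finally $\frac{d}{dt}\log P(t)=\sum_{i=1}^{n}(t+x_{i})^{-1}$, so $P'(a)/P(a)=\sum_{i=1}^{n}(a+x_{i})^{-1}$, and the asserted equivalence $F(x_{1},\ldots,x_{n})\ge 0\iff\sum_{i=1}^{n}(a+x_{i})^{-1}\le 1$ follows. There is essentially no obstacle beyond spotting the identity $F=P(a)-P'(a)$; the only care needed is the bookkeeping of the boundary terms $m=0$ and $m=n$ and the harmless formal factor $t^{-1}$.
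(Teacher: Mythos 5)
Your proof is correct. The identity $F(x_1,\ldots,x_n)=P(a)-P'(a)$ with $P(t)=\prod_{i=1}^n(t+x_i)$, combined with $P'(a)/P(a)=\sum_{i=1}^n(a+x_i)^{-1}$ and $P(a)>0$, gives the equivalence immediately, and your bookkeeping of the $m=0$ and $m=n$ terms is accurate. The paper itself states this lemma as a citation to \cite{Ha03} and supplies no proof, so there is nothing internal to compare against; your argument is the standard one (equivalently, writing $F=P(a)\bigl(1-\sum_{i=1}^n(a+x_i)^{-1}\bigr)$ since $P'(a)=\sum_i\prod_{j\neq i}(a+x_j)$) and is essentially the proof given in the cited reference.
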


\begin{lemma}{\rm{(\cite{Ha03})}}\label{le:2ies}
For $x_i\geq 0$, $i=1,2,\ldots,n$, $x\geq 0$ and real number $a$, we have the following:
\begin{align}
\sum_{1\leq i_{1}<\cdots<i_{n-m}\leq n}x_{i_1}\cdots x_{i_{n-m}}\geq \frac{n!}{(n-m)!m!}(x_{1}\cdots x_{n})^{\frac{n-m}{n}}, \quad 0\leq m< n;\label{2ies-1}
\end{align}
\begin{align}
(x^{\frac{1}{n}}+a)^{n-1}(x^{\frac{1}{n}}+a-n)=\sum_{m=0}^{n} a^{m-1}
(a-m)\frac{n!}{(n-m)!m!}x^{\frac{n-m}{n}}.\label{2ies-2}
\end{align}
\end{lemma}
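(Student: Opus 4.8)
The plan is to treat the two inequalities separately, as \eqref{2ies-1} is a genuine inequality while \eqref{2ies-2} is an algebraic identity that follows from expanding a product and applying the binomial theorem. For \eqref{2ies-1}, I would observe that the left-hand side is exactly the $(n-m)$-th elementary symmetric polynomial $e_{n-m}(x_1,\ldots,x_n)$, which is a sum of $\binom{n}{n-m}=\frac{n!}{(n-m)!m!}$ monomials, each of the form $x_{i_1}\cdots x_{i_{n-m}}$. The natural tool is the AM--GM inequality applied to these $\binom{n}{n-m}$ nonnegative terms: their arithmetic mean is at least their geometric mean. The geometric mean is $\bigl(\prod x_{i_1}\cdots x_{i_{n-m}}\bigr)^{1/\binom{n}{n-m}}$, where the product runs over all $\binom{n}{n-m}$ subsets of size $n-m$. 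By symmetry, in this product each variable $x_j$ appears in exactly $\binom{n-1}{n-m-1}$ of the subsets, so the product equals $(x_1\cdots x_n)^{\binom{n-1}{n-m-1}}$. Hence the geometric mean is $(x_1\cdots x_n)^{\binom{n-1}{n-m-1}/\binom{n}{n-m}}$, and a short computation gives $\binom{n-1}{n-m-1}/\binom{n}{n-m}=(n-m)/n$. Multiplying the AM--GM inequality through by $\binom{n}{n-m}=\frac{n!}{(n-m)!m!}$ yields exactly \eqref{2ies-1}. (The case where some $x_j=0$ is trivial since then both sides involving that factor behave consistently; it suffices to note AM--GM holds for nonnegative reals.)

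For \eqref{2ies-2}, I would set $y=x^{1/n}$ so that the left-hand side becomes $(y+a)^{n-1}(y+a-n)$. Writing $y+a-n=(y+a)-n$, this equals $(y+a)^n-n(y+a)^{n-1}$. Now apply the binomial theorem to both terms: $(y+a)^n=\sum_{k=0}^{n}\binom{n}{k}a^{k}y^{n-k}$ and $n(y+a)^{n-1}=n\sum_{k=0}^{n-1}\binom{n-1}{k}a^{k}y^{n-1-k}$. Re-indexing the second sum so both are sums over the power $y^{n-m}$ for $m=0,\ldots,n$, the coefficient of $y^{n-m}$ is $\binom{n}{m}a^{m}-n\binom{n-1}{m-1}a^{m-1}$. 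Using $n\binom{n-1}{m-1}=m\binom{n}{m}$, this coefficient is $\binom{n}{m}a^{m-1}(a-m)$, and since $\binom{n}{m}=\frac{n!}{(n-m)!m!}$ and $y^{n-m}=x^{(n-m)/n}$, we recover precisely the right-hand side of \eqref{2ies-2}.

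I do not anticipate a serious obstacle here: both parts are elementary once the right combinatorial identities are invoked. The only point requiring mild care is the exponent bookkeeping in \eqref{2ies-1} — verifying that each variable occurs in $\binom{n-1}{n-m-1}$ of the size-$(n-m)$ subsets and that the resulting exponent simplifies to $(n-m)/n$ — and in \eqref{2ies-2} the index shift together with the identity $n\binom{n-1}{m-1}=m\binom{n}{m}$. Everything else is a direct application of AM--GM and the binomial theorem. Since the statement is attributed to \cite{Ha03}, one could alternatively simply cite that reference, but the self-contained argument above is short enough to include.
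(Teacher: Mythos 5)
Your argument is correct and complete. The paper itself gives no proof of this lemma---it simply cites \cite{Ha03}---so there is nothing to compare against beyond noting that your route (AM--GM applied to the $\binom{n}{n-m}$ monomials of $e_{n-m}$, with the exponent count $\binom{n-1}{n-m-1}/\binom{n}{n-m}=(n-m)/n$, and the binomial expansion of $(y+a)^n-n(y+a)^{n-1}$ with $n\binom{n-1}{m-1}=m\binom{n}{m}$) is the standard one and all the bookkeeping checks out. The only cosmetic point worth a remark is the $m=0$ term of \eqref{2ies-2}, where $a^{m-1}(a-m)$ reads formally as $a^{-1}\cdot a$; your derivation shows the coefficient there is $1$, which is the intended reading of the (slightly abusive) notation in the statement.
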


A permutation $\sigma\in S_{n}$ is called a {\it cycle of length $k$} ($k=1,2,\ldots,n$) if for $k$ distinct points $\{i_{1},i_{2},\ldots,i_{k}\}\subseteq\{1,2,\ldots,n\}$, we have that $\sigma(i_{j})=i_{j+1}$ ($j=1,2,\ldots k-1$),
$\sigma(i_{k})=i_{1}$ and $\sigma(i)=i$ for all $i\in\{1,2,\ldots,n\}\backslash \{i_{1},i_{2},\ldots,i_{k}\}$. In the following, denote $l(\sigma)$ be the length of a cycle $\sigma$.
It is well known (for example \cite{GTM163}) that each $\sigma\in S_{n}$ has a unique disjoint cycle decomposition $\sigma=\sigma_{1}\sigma_{2}\cdots\sigma_{r}$, where each $\sigma_{i}$ ($i=1,2,\ldots, r$) is a cycle. In the following, for each $\sigma\in S_{n}$ with the unique disjoint cycle decomposition $\sigma=\sigma_{1}\sigma_{2}\cdots\sigma_{r}$, we denote the maximal and the minimal length of $\sigma_{i}$ ($i=1,2,\ldots, r$) by $l_{\max}(\sigma)$ and $l_{\min}(\sigma)$ respectively, that is,
$$l_{\max}(\sigma)=\max\{l(\sigma_{1}),l(\sigma_{2}),\ldots,l(\sigma_{r})\}, $$
and $$l_{\min}(\sigma)=\min\{l(\sigma_{1}),l(\sigma_{2}),\ldots,l(\sigma_{r})\}.$$

Suppose that $k\in\{1,2,\ldots,n\}$. If $k|n$, it is not hard to see that $\tau_{k}^{n}$ (defined in (\ref{eq:translation})) can be decomposed into $k$ disjoint cycles and each cycle has length $\frac{n}{k}$. For $k \nmid n$, if $(n,k)=r$, then $r|n$ and each $i\in \{1,2,\ldots,n\}$ can be written as $i=u+rv$, where $1\leq u \leq r$ and $0\leq v \leq \frac{n}{r}-1$. Just as in \cite{HaYu12}, define $\sigma\in S_{n}$ by
$$\sigma(i)=\sigma(u+rv)\equiv u+kv \pmod n.$$
It is not hard to see that $\tau_{r}^{n}=\sigma^{-1}\tau_{k}^{n}\sigma $, that is, $\tau_{r}^{n}$ and $\tau_{k}^{n}$ are conjugate in $S_{n}$. Hence $\tau_{r}^{n}$ and $\tau_{k}^{n}$ have the same number of cycles of each type \cite{GTM163}, that is, $\tau_{k}^{n}$ can be decomposed into $r$ disjoint cycles and each cycle has length $\frac{n}{r}$. So for each $k\in \{1,2,\ldots,n\}$, we have that $l_{\min}(\tau_{k}^{n})=l_{\max}(\tau_{k}^{n})=\frac{n}{(n,k)}$. It is not hard to see that if $k\neq n$ and $\frac{n}{2}$ (when $n$ ($n\geq 3$) is even), then $l_{\min}(\tau_{k}^{n})=l_{\max}(\tau_{k}^{n})\geq 3$. Hence we have the following lemma.

\begin{lemma}\label{le:length}
Suppose that $k\in\{1,2,\ldots,n\}$ . Let $\tau_{k}^{n}$ be the permutation defined in (\ref{eq:translation}). Then we have that $l_{\min}(\tau_{k}^{n})=l_{\max}(\tau_{k}^{n})=\frac{n}{(n,k)}$ and the following:
\begin{enumerate}[label=\textup{(\roman*)}, ref=\textup{\roman*}]
\item\label{le:lengthn}if $k=n$ , then $l_{\min}(\tau_{k}^{n})=l_{\max}(\tau_{k}^{n})=1$;
\item\label{le:lengthn/2}if $k=\frac{n}{2}$ when $n$ is even, then $l_{\min}(\tau_{k}^{n})=l_{\max}(\tau_{k}^{n})=2$;
\item\label{le:lengthno} if $k\neq n$ and $\frac{n}{2}$ (when $n$ ($n\geq 3$) is even), then $l_{\min}(\tau_{k}^{n})=l_{\max}(\tau_{k}^{n})\geq 3$.
\end{enumerate}
\end{lemma}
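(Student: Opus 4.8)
The plan is to identify $\{1,2,\ldots,n\}$ with the cyclic group $\mathbb{Z}/n\mathbb{Z}$, under which, by (\ref{eq:translation}), the permutation $\tau_{k}^{n}$ becomes the translation $i\mapsto i+k$. The first step is to compute the common cycle length. For any $i$, the cycle of $\tau_{k}^{n}$ through $i$ is the orbit $\{i,\,i+k,\,i+2k,\ldots\}$, whose cardinality is the least positive integer $m$ with $mk\equiv 0\pmod n$; an elementary computation gives $m=\frac{n}{(n,k)}$, a value that does not depend on $i$. Hence every cycle in the disjoint cycle decomposition of $\tau_{k}^{n}$ has length $\frac{n}{(n,k)}$, and in particular $l_{\min}(\tau_{k}^{n})=l_{\max}(\tau_{k}^{n})=\frac{n}{(n,k)}$. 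This is exactly the content already sketched in the paragraph preceding the statement; alternatively one can reach it through the conjugacy $\tau_{r}^{n}=\sigma^{-1}\tau_{k}^{n}\sigma$ noted there, since conjugate permutations in $S_{n}$ share the same cycle type.

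The three items then follow by evaluating $\frac{n}{(n,k)}$. For (\ref{le:lengthn}), $k=n$ gives $(n,k)=n$, hence length $1$. For (\ref{le:lengthn/2}), if $n$ is even and $k=\frac{n}{2}$, then $(n,k)=\frac{n}{2}$, hence length $2$. For (\ref{le:lengthno}) I would argue contrapositively: suppose $\frac{n}{(n,k)}\leq 2$, i.e.\ $(n,k)\geq\frac{n}{2}$. Any divisor $d$ of $n$ with $d\geq\frac{n}{2}$ satisfies $n/d\in\{1,2\}$, hence $d=n$ or (when $n$ is even) $d=\frac{n}{2}$; so $(n,k)\in\{\frac{n}{2},\,n\}$. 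If $(n,k)=n$, then $n\mid k$ and $1\leq k\leq n$ force $k=n$. If $(n,k)=\frac{n}{2}$ (so $n$ is even), then $\frac{n}{2}\mid k$ forces $k\in\{\frac{n}{2},\,n\}$, and $k=n$ is impossible since it would give $(n,k)=n\neq\frac{n}{2}$, leaving $k=\frac{n}{2}$. Thus $\frac{n}{(n,k)}\leq 2$ implies $k=n$ or ($n$ even and $k=\frac{n}{2}$), which is precisely the negation of the hypothesis in (\ref{le:lengthno}) (for $n\geq 3$; when $n=2$ the case is vacuous). This gives (\ref{le:lengthno}).

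The argument is entirely elementary, so there is no serious obstacle. The only point requiring a little care is the last step: correctly listing the divisors of $n$ that can be at least $\frac{n}{2}$, and translating the condition $(n,k)=\frac{n}{2}$ back into a condition on $k$ while keeping track of the parity hypothesis on $n$.
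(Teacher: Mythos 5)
Your proof is correct and takes essentially the same route as the paper, which likewise reduces everything to the fact that every cycle of $\tau_{k}^{n}$ has length $\frac{n}{(n,k)}$ and then evaluates that quantity in each case. The only immaterial difference is that you obtain the common cycle length by directly computing the additive order of $k$ modulo $n$, whereas the paper treats $k\mid n$ directly and handles $k\nmid n$ via the conjugacy $\tau_{(n,k)}^{n}=\sigma^{-1}\tau_{k}^{n}\sigma$ (which you also mention); your contrapositive divisor analysis for item (iii) is more detailed than the paper's ``it is not hard to see,'' and correctly accounts for the vacuous case $n=2$.
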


\begin{lemma}\label{le:mainie}
Let $a, c_1,c_{2} \ldots, c_n$ be positive real numbers. For each $\sigma\in S_{n}$, if
\begin{align}
a\geq \max\{n-1, n-(c_1c_{2}\cdots c_n)^{\frac{1}{n}}\},
\end{align}
we have the following inequality
\begin{align}
\frac{\alpha_1}{a\alpha_1+c_1\alpha_{\sigma(1)}}
+\frac{\alpha_2}{a\alpha_2+c_2\alpha_{\sigma(2)}}
+\frac{\alpha_3}{a\alpha_3+c_3\alpha_{\sigma(3)}}+\cdots
+\frac{\alpha_n}{a\alpha_n+c_n\alpha_{\sigma(n)}}\leq 1 \label{ie0}
\end{align}
for any positive real numbers $\alpha_1,\alpha_2,\ldots,\alpha_n$. If $\sigma$ is a cycle of length $n$, then the converse is also held.
\end{lemma}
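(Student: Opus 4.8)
The plan is to translate \eqref{ie0} into the single scalar inequality $\sum_{i=1}^{n}(a+x_i)^{-1}\le 1$ and then apply Lemmas \ref{le:symf} and \ref{le:2ies}. Given positive reals $\alpha_1,\ldots,\alpha_n$, set $x_i:=c_i\alpha_{\sigma(i)}/\alpha_i>0$; then each summand on the left of \eqref{ie0} equals $(a+x_i)^{-1}$, so \eqref{ie0} says precisely that $\sum_{i=1}^n (a+x_i)^{-1}\le 1$. Since $\sigma$ is a bijection, the factors $\alpha_{\sigma(i)}/\alpha_i$ have product $1$, hence $\prod_{i=1}^n x_i=c_1c_2\cdots c_n=:P$. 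As $a>0$, Lemma \ref{le:symf} tells us this is equivalent to $F(x_1,\ldots,x_n)\ge 0$, where $F$ is the symmetric function of Lemma \ref{le:symf}.

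For the forward implication, write $F(x)=\sum_{m=0}^{n}a^{m-1}(a-m)\,e_{n-m}(x)$, where $e_{j}(x)=\sum_{1\le i_1<\cdots<i_j\le n}x_{i_1}\cdots x_{i_j}$. Because $a\ge n-1$, for every $m$ with $0\le m\le n-1$ the coefficient $a^{m-1}(a-m)$ is nonnegative, so one may weaken $e_{n-m}(x)\ge \binom{n}{m}P^{(n-m)/n}$ by \eqref{2ies-1}; for $m=n$ one has $e_0(x)=1=\binom{n}{n}P^{0}$ outright. Summing these estimates and invoking \eqref{2ies-2} with $x$ replaced by $P$ yields $F(x)\ge \big(P^{1/n}+a\big)^{n-1}\big(P^{1/n}+a-n\big)$. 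Here $P^{1/n}+a>0$, and the hypothesis $a\ge n-(c_1c_2\cdots c_n)^{1/n}=n-P^{1/n}$ makes the second factor nonnegative; thus $F(x)\ge 0$ and \eqref{ie0} follows.

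For the converse, suppose $\sigma$ is a cycle of length $n$; after relabelling indices we may assume $\sigma(i)\equiv i+1\pmod n$. The key point is that the map $(\alpha_i)\mapsto(x_i)$ is now \emph{onto} the set $\{(x_i): x_i>0,\ \prod_i x_i=P\}$: given such an $x$, put $\beta_i:=x_i/c_i$ (so $\prod_i\beta_i=1$), $\alpha_1:=1$ and $\alpha_{i+1}:=\alpha_i\beta_i$ for $1\le i\le n-1$; one checks that the cyclic compatibility $\alpha_{n+1}=\alpha_1$ holds automatically, so indeed $x_i=c_i\alpha_{\sigma(i)}/\alpha_i$. Consequently, if \eqref{ie0} holds for all positive $\alpha$, then $F(x)\ge 0$ for every positive $x$ with $\prod_i x_i=P$. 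Taking $x_1=\cdots=x_n=P^{1/n}$ and using \eqref{2ies-2} gives $\big(P^{1/n}+a\big)^{n-1}\big(P^{1/n}+a-n\big)\ge 0$, hence $a\ge n-P^{1/n}$; taking instead $x_1=t$, $x_2=\cdots=x_n=(P/t)^{1/(n-1)}$ and letting $t\to\infty$ (so the remaining $x_i\to 0$) forces $\lim_{t\to\infty}\sum_i(a+x_i)^{-1}=(n-1)/a\le 1$, i.e.\ $a\ge n-1$. Combining, $a\ge\max\{n-1,\ n-(c_1c_2\cdots c_n)^{1/n}\}$.

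The main obstacle is the bookkeeping in the second paragraph: one must check that exactly the coefficients being weakened via \eqref{2ies-1} are nonnegative — which is precisely where $a\ge n-1$ enters — and that the leftover $m=n$ term reassembles with the rest into the closed form of \eqref{2ies-2}. The other delicate point is the surjectivity assertion used in the converse, which is exactly where the $n$-cycle hypothesis is essential: for a general $\sigma$ each of its cycles imposes a separate product constraint on the corresponding $x_i$ (and each fixed point pins $x_i=c_i$), so one cannot realize an arbitrary $x$ with $\prod_i x_i=P$, and the converse genuinely fails.
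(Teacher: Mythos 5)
Your proof is correct and follows essentially the same route as the paper: the forward direction is identical (substitute $x_i = c_i\alpha_{\sigma(i)}/\alpha_i$, reduce via Lemma \ref{le:symf} to $F\geq 0$, and bound $F$ using both parts of Lemma \ref{le:2ies}). Your converse, phrased as surjectivity of $(\alpha_i)\mapsto(x_i)$ onto $\{x_i>0:\prod_i x_i=c_1\cdots c_n\}$ followed by two test configurations, is a slightly cleaner packaging of the paper's argument, which simply exhibits the two corresponding explicit choices of $\alpha$ directly.
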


\begin{proof}
Suppose that $a\geq \max\{n-1, n-(c_1c_{2}\cdots c_n)^{\frac{1}{n}} \}$. Let $x_i= c_i \frac{\alpha_{\sigma(i)}}{\alpha_i}$ for $i\in \{1,2,\ldots,n\}$, then $x_{1}x_{2}\cdots x_{n}=c_{1}c_{2}\cdots c_{n}$. For $F(x_1,\ldots,x_n)$ in Lemma \ref{le:symf}, we have
\begin{align}
F(x_1,\ldots,x_n)&=\sum_{m=0}^n\left[a^{m-1}(a-m)\sum_{1\leq i_1<\ldots<i_{n-m}\leq n}x_{i_1}\cdots x_{i_{n-m}}\right]\nonumber\\
&=\sum_{m=0}^{n-1}\left[a^{m-1}(a-m)\sum_{1\leq i_1<\ldots<i_{n-m}\leq n}x_{i_1}\cdots x_{i_{n-m}}\right]+a^{n-1}(a-n)\nonumber\\
&\geq  \sum_{m=0}^{n-1} a^{m-1}(a-m) \frac{n!}{(n-m)!m!}(x_{1} x_{2} \cdots x_{n})^{\frac{n-m}{n}} + a^{n-1}(a-n) \label{le:mainie-1}\\
&=\sum_{m=0}^n  a^{m-1}(a-m) \frac{n!}{(n-m)!m!} (x_{1} x_{2} \cdots x_{n})^{\frac{n-m}{n}}\nonumber\\
&=((c_{1}c_{2} \cdots c_n)^{\frac{1}{n}}+a)^{n-1}(a+(c_{1}c_{2}\cdots c_n)^{\frac{1}{n}}-n)\label{le:mainie-2}\\
&\geq 0.\label{le:mainie-3}
\end{align}
Since $a\geq n-1$, we have that $a-m\geq 0$ for $m=0,1,\ldots,n-1$ and (\ref{le:mainie-1}) is obtained by (\ref{2ies-1}) in Lemma \ref{le:2ies}. From (\ref{2ies-2}) of Lemma \ref{le:2ies}, we have (\ref{le:mainie-2}). Since $a\geq n-(c_1c_{2}\cdots c_n)^{\frac{1}{n}}$, we have (\ref{le:mainie-3}). Hence by Lemma \ref{le:symf} we get the desired inequality
$$\sum_{i=1}^n \frac{\alpha_i}{a\alpha_i+c_i \alpha_{\sigma(i)}}\leq 1.$$

Conversely, suppose that (\ref{ie0}) holds for any positive real numbers $\alpha_1,\alpha_2,\ldots,\alpha_n$ and $\sigma$ is a cycle of length $n$. It is not hard to see that $\{\sigma^{1}(1),\sigma^{2}(1),\ldots,\sigma^{n}(1)
\}=\{1,2,\ldots,n\}$.

First, we show that $a\geq n-1$. For any $\lambda>0$, we choose
$$\alpha_{i}=\lambda^{-s}\quad\text{if}\quad i=\sigma^{s}(1),$$
where $s\in \{1,2,\ldots, n\}$. Note that $1=\sigma^{n}(1)$. So if $i=\sigma^{s}(1)$ with $s\in\{1,2,\ldots,n-1\}$, then for $i=2,3,\ldots,n$ we have that $\sigma(i)=\sigma^{s+1}(1)$. Hence we have
$$\frac{\alpha_{\sigma(i)}}{\alpha_i}=\frac{\lambda^{-(s+1)}}{\lambda^{-s}}
=\frac{1}{\lambda},~i=2,3,\ldots,n ,$$
and
$$ \frac{\alpha_{\sigma(1)}}{\alpha_1}=\frac{\lambda^{-1}}{\lambda^{-n}}=\lambda^{n-1}.$$
Now from (\ref{ie0}) we have
\begin{align}
\sum_{i=1}^n \frac{\alpha_i}{a\alpha_i+c_i\alpha_{\sigma(i)}}&
=\frac{1}{a+c_{1}\frac{\alpha_{\sigma(1)}}{\alpha_1}}
+\frac{1}{a+c_{2}\frac{\alpha_{\sigma(2)}}{\alpha_2}}+\cdots
+\frac{1}{a+c_{n}\frac{\alpha_{\sigma(n)}}{\alpha_n}}\nonumber\\
&=\frac{1}{a+c_{1}\lambda^{n-1}}+\frac{1}{a+\frac{c_{2}}{\lambda}}
+\cdots+\frac{1}{a+\frac{c_{n}}{\lambda}}\leq 1\label{ie1}.
\end{align}
Take $\lambda\to +\infty$, then we have that $\frac{n-1}{a}\leq 1$ by (\ref{ie1}). So we obtain that $a\geq n-1$.

Next, we show that $a\geq n-(c_{1}c_{2}\cdots c_{n})^{\frac{1}{n}}$. Let $d=(c_{1}c_{2} \cdots c_{n})^{\frac{1}{n}}$. For each $i\in \{1,2,\ldots,n\}$, if $i=\sigma^{k}(1)$ for some $k\in\{1,2,\ldots,n\}$, we let
\begin{align}\label{id2}
\alpha_{i}=\alpha_{\sigma^{k}(1)}=\frac{\alpha_{1} d^{k}}{c_{\sigma^{0}(1)}c_{\sigma(1)}\cdots c_{\sigma^{k-1}(1)}},
\end{align}
where $\sigma^{0}(1)=1$.

For $i,k\in\{1,2,\ldots,n\}$, if $i=\sigma^{k}(1)$, then from (\ref{id2}) we have
\begin{align}\label{eq:sigmaid}
c_i\frac{\alpha_{\sigma(i)}}{\alpha_i}&=
c_{\sigma^{k}(1)}\frac{\alpha_{\sigma(\sigma^{k}(1))}}
{\alpha_{\sigma^{k}(1)}}
=c_{\sigma^{k}(1)}\frac{\alpha_{\sigma^{k+1}(1)}}
{\alpha_{\sigma^{k}(1)}}\nonumber\\
&=c_{\sigma^{k}(1)}\frac{\alpha_{1} d^{k+1}}{c_{\sigma^{0}(1)}c_{\sigma(1)}\cdots c_{\sigma^{k}(1)}}\frac{c_{\sigma^{0}(1)}c_{\sigma(1)}\cdots c_{\sigma^{k-1}(1)}}{\alpha_{1} d^{k}}\\
&=d\nonumber.
\end{align}
Hence from (\ref{eq:sigmaid}) and (\ref{ie0}) we have
\begin{align*}
\sum_{i=1}^n \frac{\alpha_i}{a\alpha_i+c_i\alpha_{\sigma(i)}}=
\sum_{i=1}^{n}\frac{1}{a+c_{i}\frac{\alpha_{\sigma(i)}}{\alpha_i}}
=\frac{n}{a+d}\leq 1.
\end{align*}
So we get $a\geq n-d=n-(c_{1}c_{2}\cdots c_n)^{\frac{1}{n}}$.

From discussions above, we have that $a\geq \max\{n-1, n-(c_{1}c_{2}\cdots c_n)^{\frac{1}{n}} \}$.
\end{proof}

\begin{lemma}{\rm{(\cite{Tomi})}}\label{le:inv}
Let $A$ be a positive invertible operator on a Hilbert space, and $\xi_{0}$ the unit vector associated with a one dimensional projection P. Then $A \geq P$ if and only if $\langle A^{-1}\xi_0, \xi_0\rangle\leq 1$.
\end{lemma}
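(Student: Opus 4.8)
The plan is to reduce the operator inequality $A\geq P$ to a single scalar inequality by conjugating with $A^{-1/2}$. Since $A$ is positive and invertible, its positive square root $A^{1/2}$ is invertible with inverse $A^{-1/2}$; set $\zeta_0:=A^{-1/2}\xi_0$. Because $\xi_0$ is a unit vector and $A^{-1/2}$ is invertible, $\zeta_0\neq 0$, and $\|\zeta_0\|^2=\langle A^{-1/2}\xi_0,A^{-1/2}\xi_0\rangle=\langle A^{-1}\xi_0,\xi_0\rangle$. So it suffices to prove that $A\geq P$ if and only if $\|\zeta_0\|^2\leq 1$.

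First I would note that conjugation by the invertible operator $A^{-1/2}$ is an order isomorphism on the self-adjoint operators, so $A\geq P$ is equivalent to $I\geq A^{-1/2}PA^{-1/2}$. Writing $P\eta=\langle\eta,\xi_0\rangle\xi_0$, one computes $A^{-1/2}PA^{-1/2}\eta=\langle\eta,\zeta_0\rangle\zeta_0$; that is, $A^{-1/2}PA^{-1/2}$ is the rank-one positive operator $R_{\zeta_0}\colon\eta\mapsto\langle\eta,\zeta_0\rangle\zeta_0$.

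Next I would show, for any nonzero $\zeta_0$, that $R_{\zeta_0}\leq I$ if and only if $\|\zeta_0\|^2\leq 1$. The ``only if'' direction follows by testing the quadratic form at $\eta=\zeta_0$: $\langle R_{\zeta_0}\zeta_0,\zeta_0\rangle=\|\zeta_0\|^4\leq\|\zeta_0\|^2$, which forces $\|\zeta_0\|^2\leq 1$. The ``if'' direction is Cauchy--Schwarz: $\langle R_{\zeta_0}\eta,\eta\rangle=|\langle\eta,\zeta_0\rangle|^2\leq\|\eta\|^2\|\zeta_0\|^2\leq\|\eta\|^2$ for every $\eta$. Chaining the two equivalences gives $A\geq P\iff\|\zeta_0\|^2\leq 1\iff\langle A^{-1}\xi_0,\xi_0\rangle\leq 1$, as desired.

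The argument is routine and there is no real obstacle; the only points worth stating carefully are the existence and invertibility of $A^{-1/2}$ (where positivity and invertibility of $A$ enter) and the fact that conjugation by an invertible operator preserves operator order. An alternative proof avoiding square roots tests $A\geq P$ against $\eta=A^{-1}\xi_0$ for the forward implication and uses the Cauchy--Schwarz inequality for the positive semidefinite form $(\eta,\zeta)\mapsto\langle A\eta,\zeta\rangle$ with $\zeta=A^{-1}\xi_0$ for the converse; I would include whichever is more concise in the final write-up.
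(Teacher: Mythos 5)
Your argument is correct. The paper itself gives no proof of this lemma --- it is quoted from Tanahashi--Tomiyama \cite{Tomi} --- so there is nothing to compare against; your reduction via conjugation by $A^{-1/2}$ to the statement that the rank-one operator $\eta\mapsto\langle\eta,\zeta_0\rangle\zeta_0$ is dominated by $I$ exactly when $\|\zeta_0\|^2\le 1$ is the standard proof, and every step (invertibility of $A^{1/2}$, order-preservation under congruence, the Cauchy--Schwarz direction and the test at $\eta=\zeta_0$) is sound.
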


\begin{theorem}\label{th:positive}
Let $a, c_1,c_{2} \ldots, c_n$ be positive real numbers. For each $\sigma\in S_{n}$, if $a\geq \max\{n-1, n-(c_1c_2\cdots c_n)^{\frac{1}{n}} \}$, then $\Theta^{(n,\sigma)}[a;c_1,c_{2} \ldots, c_n]:M_n \mapsto M_n$ is positive. Moreover, if $\sigma$ is a cycle of length $n$, then the converse is also held.
\end{theorem}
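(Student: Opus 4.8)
The plan is to use the standard criterion that a linear map $\phi\colon M_n\to M_n$ is positive if and only if $\phi(xx^*)\ge 0$ for every $x\in\mathbb{C}^n$; this reduces to rank-one inputs because an arbitrary positive matrix is a sum of such $xx^*$ via its spectral decomposition. For $x=(x_1,\ldots,x_n)^t$ we have $(xx^*)_{ij}=x_i\bar x_j$, so from the definition of $\Delta^{(n,\sigma)}$,
\begin{align*}
\Theta^{(n,\sigma)}(xx^*)=\mathrm{diag}\bigl(a\abs{x_1}^2+c_1\abs{x_{\sigma(1)}}^2,\ldots,a\abs{x_n}^2+c_n\abs{x_{\sigma(n)}}^2\bigr)-xx^*.
\end{align*}
Since $x\mapsto\Theta^{(n,\sigma)}(xx^*)$ is continuous and the cone of positive matrices in $M_n$ is closed, it suffices to verify $\Theta^{(n,\sigma)}(xx^*)\ge 0$ for $x$ ranging over the dense subset of vectors all of whose coordinates are nonzero.

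Fix such an $x$ and set $A:=\mathrm{diag}\bigl(a\abs{x_i}^2+c_i\abs{x_{\sigma(i)}}^2\bigr)_{i=1}^n$, a positive invertible diagonal matrix. Writing $t:=\norm{x}^2$ and $\xi_0:=x/\norm{x}$ we have $xx^*=t\,\xi_0\xi_0^*$, so $\Theta^{(n,\sigma)}(xx^*)\ge 0$ is equivalent to $t^{-1}A\ge\xi_0\xi_0^*$. By Lemma \ref{le:inv} this holds if and only if $\langle (t^{-1}A)^{-1}\xi_0,\xi_0\rangle\le 1$, i.e.\ $\langle A^{-1}x,x\rangle\le 1$, which upon expansion reads
\begin{align*}
\sum_{i=1}^n\frac{\abs{x_i}^2}{a\abs{x_i}^2+c_i\abs{x_{\sigma(i)}}^2}\le 1.
\end{align*}
Putting $\alpha_i:=\abs{x_i}^2>0$ this is precisely inequality (\ref{ie0}) of Lemma \ref{le:mainie}. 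Hence the hypothesis $a\ge\max\{n-1,\,n-(c_1c_2\cdots c_n)^{1/n}\}$ together with Lemma \ref{le:mainie} gives the desired positivity.

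For the converse, assume $\sigma$ is a cycle of length $n$ and $\Theta^{(n,\sigma)}$ is positive. Given arbitrary positive reals $\alpha_1,\ldots,\alpha_n$, apply the computation above to $x:=(\sqrt{\alpha_1},\ldots,\sqrt{\alpha_n})^t$, whose coordinates are all nonzero: positivity of $\Theta^{(n,\sigma)}(xx^*)$ forces, through the equivalence just established, inequality (\ref{ie0}) for these $\alpha_i$. As $\alpha_1,\ldots,\alpha_n$ were arbitrary, the converse part of Lemma \ref{le:mainie} (which is exactly where the hypothesis that $\sigma$ be an $n$-cycle is needed) yields $a\ge\max\{n-1,\,n-(c_1c_2\cdots c_n)^{1/n}\}$. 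The substance of the argument is carried by Lemmas \ref{le:inv} and \ref{le:mainie}, so the only mild technical point is the reduction to vectors with nonzero coordinates; I expect no real obstacle beyond that routine density and continuity step.
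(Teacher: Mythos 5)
Your proposal is correct and follows essentially the same route as the paper: reduce positivity to rank-one inputs, apply Lemma \ref{le:inv} to the diagonal matrix $\Delta^{(n,\sigma)}(\xi_0\xi_0^*)$ to convert the condition into inequality (\ref{ie0}), and then invoke Lemma \ref{le:mainie} in both directions. Your explicit density/continuity justification for restricting to vectors with all coordinates nonzero is a small refinement of the paper's ``without loss of generality'' step, not a different argument.
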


\begin{proof}
$\Theta^{(n,\sigma)}$ is positive if and only if $\Theta^{(n,\sigma)}(P)\geq 0$ for every one dimensional projection $P$, which means $\Delta^{(n,\sigma)}(P)\geq P$. Let $\xi_{0}=(x_1,\ldots,x_n)^{t}$ be the unit vector associated with $P$, that is, $P=\xi_{0}\xi_{0}^{*}$. Without loss of generality, we can assume that $x_i \neq 0$ for $i=1,\ldots,n$. Then the matrix $\Delta^{(n,\sigma)}(P)$ has the form
$$ \left(
      \begin{array}{cccc}
        a |x_1|^{2}+c_1|x_{\sigma(1)}|^{2} & 0 & \cdots & 0  \\
        0 & a|x_2|^{2}+c_2|x_{\sigma(2)}|^{2} & \cdots & 0 \\
      \vdots & \vdots &  \ddots & \vdots  \\
        0 & 0 & \cdots & a|x_n|^{2}+c_{n}|x_{\sigma(n)}|^2  \\

      \end{array}
    \right).$$
Hence $A=\Delta^{(n,\sigma)} (P)$ is invertible and positive. From Lemma \ref{le:inv} we can see that $\Theta^{(n,\sigma)}$ is positive if and only if
\begin{align}\label{ieinv}
\langle A^{-1}\xi_{0},\xi_{0}\rangle &=\frac{|x_1|^{2}}{a|x_1|^{2}+c_1|x_{\sigma(1)}|^{2}}
  +\frac{|x_2|^{2}}{a|x_2|^{2}+c_2|x_{\sigma(2)}|^{2}}+\cdots+
   \frac{|x_n|^{2}}{a|x_n|^{2}+c_n|x_{\sigma(n)}|^{2}}\nonumber\\
&\leq 1.
\end{align}
By Lemma \ref{le:mainie} and (\ref{ieinv}), the proof is completed.
\end{proof}

\begin{remark}
For $\Theta^{(n,\sigma)}[a;c_1,c_{2} \ldots, c_n]$, suppose that $c\geq 0$, $a=n-c$ and $c_{1}=c_{2}=\cdots=c_{n}=c$. The map $\Theta^{(n,\sigma)}[n-c;c,c,\ldots,c]$ is discussed in Proposition 6.2 of \cite{HouLi12}. For any $\sigma\in S_{n}$ which is not necessarily a cycle of length $n$, Hou, Li et al. showed that $\Theta^{(n,\sigma)}[n-c;c,c,\ldots,c]$ is positive if and only if $c\leq \frac{n}{l_{max}(\sigma)}$. In this case, we can see that there exists $\sigma\in S_{n}$ such that the positivity of $\Theta^{(n,\sigma)}[a;c_{1},c_{2},\ldots,c_{n}]$ cannot imply ``$a\geq \max\{n-1, n-(c_1c_2\cdots c_n)^{\frac{1}{n}} \}$". So for general $\sigma\in S_{n}$, it is interesting to find a necessary and sufficient condition for the positivity of $\Theta^{(n,\sigma)}[a;c_{1},c_{2},\ldots,c_{n}]$.
\end{remark}

Suppose that $\phi:M_{n}\mapsto M_{n}$ is a linear map. For any positive integer $k$, let $M_{k}(M_{n})$ denote the block matrix algebra of order $k$ over $M_n$. Equivalently, $M_{k}(M_{n})$ is often written as $M_{k}\otimes M_{n}$. Then we can define two linear maps $\phi_{k}$ and $\phi^{k}$ on $M_{k}\otimes M_{n}$ by
\begin{align*}
\phi_{k}\left((a_{ij})_{1\leq i,j\leq k}\right)= \left(\phi(a_{ij})\right)_{1\leq i,j\leq k}
\end{align*}
and
\begin{align*}
\phi^{k}\left((a_{ij})_{1\leq i,j\leq k}\right)= \left(\phi(a_{ij}^{t})\right)_{1\leq i,j\leq k},
\end{align*}
where $a_{ij} \in M_{n}$ for $i,j=1,2,\ldots,k$. We say that $\phi$ is {\it $k$-positive} (or {\it $k$-copositive}) if $\phi_{k}$ (or $\phi^{k}$) is positive. If $\phi_{k}$ (or $\phi^{k}$) is positive for all $k=1,2,\ldots$, then $\phi$ is said to be {\it completely positive} (or {\it completely copositive}).

The {\it Choi matrix} of a linear map $\psi:M_{n}\mapsto M_{n}$ is defined by
$$C_{\psi}=\sum_{i,j=1}^{n} E_{ij}\otimes\psi(E_{ij}) \in M_n\otimes M_n.$$
It is well known \cite{Choi7510} that $\psi$ is completely positive if and only if $C_{\psi}$ is positive. It is not hard to see that $\psi$ is completely copositive if and only if $\boldsymbol{\mathrm{T}}\circ\psi$ is completely positive.

\begin{theorem}\label{th:cp2p}
Let $a,c_{1},c_{2},\ldots,c_{n}$ be positive real numbers. For $\sigma\in S_{n}$, if $l_{\min}(\sigma)\geq 2$, then the following are equivalent:
\begin{enumerate}[label=\textup{(\roman*)}, ref=\textup{\roman*}]
\item\label{cp2p1} the linear map $\Theta^{(n,\sigma)}[a; c_{1},c_{2},\ldots,c_{n}]$ is completely positive;
\item\label{cp2p2} the linear map $\Theta^{(n,\sigma)}[a; c_{1},c_{2},\ldots,c_{n}]$ is 2-positive;
\item\label{cp2p3} $a\geq n$.
\end{enumerate}
\end{theorem}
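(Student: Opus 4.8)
The plan is to establish the cycle $(\ref{cp2p1})\Rightarrow(\ref{cp2p2})\Rightarrow(\ref{cp2p3})\Rightarrow(\ref{cp2p1})$. The first implication $(\ref{cp2p1})\Rightarrow(\ref{cp2p2})$ is immediate, since a completely positive map is in particular $2$-positive. All the content sits in the remaining two implications, and the hypothesis $l_{\min}(\sigma)\geq 2$, which is exactly the statement that $\sigma$ has no fixed point (i.e. $\sigma^{-1}(i)\neq i$ for all $i$), is what makes both of them work.

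For $(\ref{cp2p3})\Rightarrow(\ref{cp2p1})$ I would simply compute the Choi matrix of $\Theta^{(n,\sigma)}$. A direct calculation gives $\Theta^{(n,\sigma)}(E_{ij})=-E_{ij}$ for $i\neq j$, and $\Theta^{(n,\sigma)}(E_{ii})=(a-1)E_{ii}+c_{\sigma^{-1}(i)}E_{\sigma^{-1}(i),\sigma^{-1}(i)}$ (here $\sigma^{-1}(i)\neq i$ is used). Substituting these into $C_{\Theta^{(n,\sigma)}}=\sum_{i,j}E_{ij}\otimes\Theta^{(n,\sigma)}(E_{ij})$ and reading off the nonzero entries, one sees that after a permutation of the canonical basis of $\mathbb{C}^{n}\otimes\mathbb{C}^{n}$ the Choi matrix is block diagonal: one block is $aI_{n}-J_{n}$, where $J_{n}$ is the $n\times n$ all-ones matrix (this block collects the $(a-1)$'s on the diagonal and the $-1$'s coming from the off-diagonal $E_{ij}$); one diagonal block is $\mathrm{diag}(c_{1},\ldots,c_{n})$ (collecting the terms $c_{\sigma^{-1}(i)}E_{\sigma^{-1}(i),\sigma^{-1}(i)}$, which by the no-fixed-point assumption occupy positions disjoint from everything else); and the rest is zero. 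Since $a\geq n$, the eigenvalues of $aI_{n}-J_{n}$ are $a-n\geq 0$ and $a>0$, so $C_{\Theta^{(n,\sigma)}}\geq 0$ and hence $\Theta^{(n,\sigma)}$ is completely positive.

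For $(\ref{cp2p2})\Rightarrow(\ref{cp2p3})$ I would argue by contraposition: assuming $a<n$, I will exhibit a single positive rank-one element of $M_{2}\otimes M_{n}$ whose image under $\Theta^{(n,\sigma)}_{2}$ is not positive. Take $\zeta=e_{1}\otimes x+e_{2}\otimes y\in\mathbb{C}^{2}\otimes\mathbb{C}^{n}$ with $x=(1,1,\ldots,1)^{t}$ and $y=(1,2,\ldots,n)^{t}$, and put $\mathbf{v}_{k}=(x_{k},y_{k})^{t}=(1,k)^{t}\in\mathbb{C}^{2}$. Because $\Delta^{(n,\sigma)}$ always outputs a diagonal matrix, $\Delta^{(n,\sigma)}_{2}(\zeta\zeta^{*})$ is a $2\times 2$ block matrix all of whose four blocks are diagonal; regrouping the basis of $\mathbb{C}^{2}\otimes\mathbb{C}^{n}$ by the $\mathbb{C}^{n}$-coordinate turns it into $\bigoplus_{k=1}^{n}B_{k}$ with $B_{k}=a\mathbf{v}_{k}\mathbf{v}_{k}^{*}+c_{k}\mathbf{v}_{\sigma(k)}\mathbf{v}_{\sigma(k)}^{*}$, while $\zeta$ becomes the vector whose $k$-th $\mathbb{C}^{2}$-block is $\mathbf{v}_{k}$. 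Since $\sigma$ has no fixed point, $k\neq\sigma(k)$, so $\mathbf{v}_{k}$ and $\mathbf{v}_{\sigma(k)}$ are linearly independent, each $B_{k}$ is positive definite, and thus $A:=\Delta^{(n,\sigma)}_{2}(\zeta\zeta^{*})$ is positive and invertible. A one-line computation (or Lemma \ref{le:inv} applied in $\mathbb{C}^{2}$) yields the key identity $\langle B_{k}^{-1}\mathbf{v}_{k},\mathbf{v}_{k}\rangle=1/a$, valid irrespective of $c_{k}$ and of the particular vectors. Hence $\langle A^{-1}\zeta,\zeta\rangle=\sum_{k=1}^{n}\langle B_{k}^{-1}\mathbf{v}_{k},\mathbf{v}_{k}\rangle=n/a>1$, so by Lemma \ref{le:inv} (the same maneuver as in the proof of Theorem \ref{th:positive}) we do not have $A\geq\zeta\zeta^{*}$, i.e. $\Theta^{(n,\sigma)}_{2}(\zeta\zeta^{*})\not\geq 0$; thus $\Theta^{(n,\sigma)}$ is not $2$-positive.

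The step I expect to be the main obstacle is $(\ref{cp2p2})\Rightarrow(\ref{cp2p3})$, since $2$-positivity is in general strictly weaker than complete positivity and one must see why the two coincide here. The crux is the block decomposition $\Delta^{(n,\sigma)}_{2}(\zeta\zeta^{*})\cong\bigoplus_{k}B_{k}$ together with the identity $\langle B_{k}^{-1}\mathbf{v}_{k},\mathbf{v}_{k}\rangle=1/a$: this lets a single rank-one test vector in $\mathbb{C}^2\otimes\mathbb{C}^{n}$ probe all $n$ diagonal slots simultaneously and forces $n/a\leq 1$. This is precisely where $l_{\min}(\sigma)\geq 2$ is indispensable — it guarantees $k\neq\sigma(k)$, so each $B_{k}$ is genuinely rank two; were $\sigma$ to fix some $k$, the matrix $B_{k}$ would drop to rank one, that slot would contribute strictly less than $1/a$, and both the argument and the equivalence would fail.
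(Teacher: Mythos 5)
Your proposal is correct and follows essentially the same route as the paper: the implication $(\ref{cp2p2})\Rightarrow(\ref{cp2p3})$ uses the identical test vector (the paper takes $x=\alpha(1,2,\ldots,n)^{t}$, $y=\alpha(1,\ldots,1)^{t}$, i.e.\ your choice with the roles of $x$ and $y$ swapped and normalized), the same regrouping of $\Delta^{(n,\sigma)}_{2}(\zeta\zeta^{*})$ into $2\times 2$ blocks $B_{k}=a\mathbf{v}_{k}\mathbf{v}_{k}^{*}+c_{k}\mathbf{v}_{\sigma(k)}\mathbf{v}_{\sigma(k)}^{*}$, the same key identity $\langle B_{k}^{-1}\mathbf{v}_{k},\mathbf{v}_{k}\rangle=1/a$ via Lemma \ref{le:inv}, and the no-fixed-point hypothesis used exactly where you use it. For $(\ref{cp2p3})\Rightarrow(\ref{cp2p1})$ your explicit block decomposition of the Choi matrix into $aI_{n}-J_{n}$, $\mathrm{diag}(c_{1},\ldots,c_{n})$ and zeros reproduces precisely the spectrum $\lambda^{n^{2}-2n}(\lambda-a)^{n-1}(\lambda-a+n)\prod_{i}(\lambda-c_{i})$ that the paper states.
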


\begin{proof}
$(\ref{cp2p1}) \Rightarrow (\ref{cp2p2})$ is clear by definition.
For $(\ref{cp2p2}) \Rightarrow (\ref{cp2p3})$, we assume that $\Theta^{(n,\sigma)}$ is 2-positive. Let $\xi=(x_{1},x_{2},\ldots,x_{n},y_{1},y_{2},\ldots,y_{n})^{t}\in \mathbb{C}^{2n}$ with $\|\xi\|=1$. Let $x=(x_{1},x_{2},\ldots,x_{n})^{t}$, $y=(y_{1},y_{2},\ldots,y_{n})^{t}\in \mathbb{C}^{n}$. Then
$$P=\xi\xi^{*}=\left(
\begin{array}{cc}
 xx^{*} & xy^{*} \\
 yx^{*} & yy^{*} \\
 \end{array}
\right)\in M_{2n}.$$
It is clear that $P$ is a projection, and so we have $\Theta^{(n,\sigma)}_{2}(P)\geq 0 $, that is,
\begin{align}
\Delta^{(n,\sigma)}_{2}(P)\geq P\label{cp2p-1}.
\end{align}

For $\Delta^{(n,\sigma)}_{2}(P)$, we have
\begin{align}\label{cp2p-de2p}
\Delta^{(n,\sigma)}_{2}(P)= \left(
\begin{array}{cc}
 \Delta^{(n,\sigma)}(xx^{*}) & \Delta^{(n,\sigma)}(xy^{*}) \\
 \Delta^{(n,\sigma)}(yx^{*}) & \Delta^{(n,\sigma)}(yy^{*})\\
 \end{array}
\right)=\sum_{i=1}^{n} A_{i} \otimes E_{ii},
\end{align}
where
$$A_{i}= \left(
\begin{array}{cc}
 a|x_{i}|^{2}+c_{i}|x_{\sigma(i)}|^{2} & ax_{i}\bar{y}_{i}+c_{i}x_{\sigma(i)}\bar{y}_{\sigma(i)} \\
 a\bar{x_{i}}y_{i}+c_{i}\bar{x}_{\sigma(i)}y_{\sigma(i)} &  a|y_{i}|^{2}+c_{i}|y_{\sigma(i)}|^{2}\\
 \end{array}
\right)\in M_2 $$
 and $E_{ii}\in M_n$. Since $l_{\min}(\sigma)\geq 2$, we have that $\sigma(i)\neq i$ for each $i\in \{1,2,\ldots,n\}$, which means that $\sigma$ has no fixed point. So for $i=1,2,\ldots,n$, we can choose real numbers $x_i$, $y_i$ such that each $A_i$ is invertible. For example, we can choose $x_i=\alpha i$ and $y_i=\alpha$ where $\alpha=(\frac{n(n+1)(2n+1)}{6}+n)^{-\frac{1}{2}}$, that is,
\begin{align*}
x=\left(
\begin{array}{c}
 x_1 \\
 x_2 \\
 \vdots  \\
 x_n\\
 \end{array}
\right) =\alpha\left( \begin{array}{c}
 1 \\
 2 \\
 \vdots  \\
 n\\
 \end{array}
\right),
\quad
 y=\left(
\begin{array}{c}
 y_1 \\
 y_2 \\
 \vdots  \\
 y_n\\
 \end{array}
\right)=\alpha\left(\begin{array}{c}
 1 \\
 1 \\
 \vdots\\
 1\\
 \end{array}
\right)\in \mathbb{R}^{n}.
\end{align*}
From the invertibility of each $A_{i}$, we see that $\Delta^{(n,\sigma)}_{2}(P)$ is invertible and
$$\Delta^{(n,\sigma)}_{2}(P)^{-1}=\sum_{i=1}^{n}A_{i}^{-1}\otimes E_{ii},$$
where
\begin{align*}
A_{i}^{-1}&=\left(
\begin{array}{cc}
 a|x_{i}|^{2}+c_{i}|x_{\sigma(i)}|^{2} & ax_{i}\bar{y_{i}}+c_{i}x_{\sigma(i)}\bar{y}_{\sigma(i)} \\
 a\bar{x_{i}}y_{i}+c_{i}\bar{x}_{\sigma(i)}y_{\sigma(i)} &  a|y_{i}|^{2}+c_{i}|y_{\sigma(i)}|^{2}\\
 \end{array}
\right)^{-1}\\
&=\frac{1}{ac_{i}\lambda_{i}}
\left(\begin{array}{cc}
a|y_{i}|^{2}+c_{i}|y_{\sigma(i)}|^{2}& -(ax_{i}\bar{y_{i}}+c_{i}x_{\sigma(i)}\bar{y}_{\sigma(i)}) \\
 -( a\bar{x_{i}}y_{i}+c_{i}\bar{x}_{\sigma(i)}y_{\sigma(i)})  &  a|x_{i}|^{2}+c_{i}|x_{\sigma(i)}|^{2} \\
\end{array}\right)
\end{align*}
and $\lambda_{i}=|x_{i}y_{\sigma(i)}-x_{\sigma(i)}y_{i}|^{2}$.

Note that
$$\xi=\left(\begin{array}{c}
 x \\
 y \\
\end{array}\right)=\sum_{i=1}^{n}z_{i}\otimes e_{i},$$
where $z_i=(x_i\ \ y_i)^t\in\mathbb{C}^{2}$. So we obtain
\begin{align}
\langle A_{i}^{-1}z_{i}, z_{i}\rangle &=
\frac{1}{ac_{i}\lambda_{i}} \left\langle\left(\begin{array}{cc}
a|y_{i}|^{2}+c_{i}|y_{\sigma(i)}|^{2}& -(ax_{i}\bar{y_{i}}+c_{i}x_{\sigma(i)}\bar{y}_{\sigma(i)}) \\
 -( a\bar{x_{i}}y_{i}+c_{i}\bar{x}_{\sigma(i)}y_{\sigma(i)})  &  a|x_{i}|^{2}+c_{i}|x_{\sigma(i)}|^{2} \\
\end{array}\right)\left(\begin{array}{c}
x_i\\
y_i\\
\end{array}\right),
\left(\begin{array}{c}
x_{i}\\
y_{i}\\
\end{array}\right)\right\rangle\nonumber\\
&=\frac{c_{i}\lambda_{i}}{ac_{i}\lambda_{i}}=\frac{1}{a}.\label{cp2p-1/a}
\end{align}
By Lemma \ref{le:inv}, (\ref{cp2p-1}) and (\ref{cp2p-1/a}), we have
\begin{align*}
\langle\Delta^{(n,\sigma)}_{2}(P)^{-1}\xi, \xi\rangle
&=\left\langle
\left(\sum_{i=1}^{n}A_{i}^{-1}\otimes E_{ii}\right)\left(\sum_{j=1}^{n}z_{j}\otimes e_{j}\right),
\sum_{j=1}^{n}z_{j}\otimes e_{j}\right\rangle\\
&=\sum_{i=1}^{n}\langle A_{i}^{-1}z_i, z_i\rangle=\sum_{i=1}^{n}\frac{1}{a}\\
&=\frac{n}{a}\leq 1.
\end{align*}
So $a\geq n$, and $(\ref{cp2p3})$ holds.

Assume that ($\ref{cp2p3}$) holds. Since $l_{\min}(\sigma)\geq 2$, it is not hard to see that the eigenfunction of $C_{\Theta^{(n,\sigma)}}$ is
$$g(\lambda)=\det(\lambda I_{n^2}-C_{\Theta^{(n,\sigma)}})= \lambda^{n^2-2n}(\lambda-a)^{n-1}(\lambda-a+n)\Pi_{i=1}^{n}(\lambda-c_{i}).$$
If $a\geq n$, the eigenvalues of $C_{\Theta^{(n,\sigma)}}$ are nonnegative. So $\Theta^{(n,\sigma)}$ is completely positive and (\ref{cp2p1}) holds.
\end{proof}

In Proposition 6.3 of \cite{HouLi12}, Hou, Li et al. gave similar results as Theorem \ref{th:cp2p} above. For the $D$-type linear map $\Lambda_{D}$ discussed there, all row sums and column sums of the nonnegative matrix $D$ associated to $\Lambda_{D}$ are equal to $n$. In Theorem \ref{th:cp2p} above, we have not required that.

For $\sigma=\tau_{n-1}^{n}$ ($n\geq 2$), from Lemma \ref{le:length} we see that $\tau_{n-1}^{n}$ is a cycle of length $n$, and so $l_{\min}(\sigma)=n$. Hence we obtain Theorem 2.5 of \cite{Ha03} from Theorem \ref{th:cp2p} above. If $\sigma=id_n$, then we have that $l_{\min}(id_{n})=1$. In this case, we have the following result.

\begin{proposition}\label{prop: posidn}
For any positive numbers $a,c_{1},\ldots,c_{n}$, the following conditions are equivalent:
\begin{enumerate}[label=\textup{(\roman*)}, ref=\textup{\roman*}]
\item\label{con:schur1} the matrix $$A=\left(
               \begin{array}{ccccc}
                 a+c_{1}-1 & -1 & \cdots & -1 \\
                 -1 & a+c_{2}-1 & \cdots & -1 \\
                 \vdots & \vdots & \ddots & \vdots \\
                 -1 & -1 & \cdots &  a+c_{n}-1\\
               \end{array}
             \right)$$ is positive;
\item\label{con:schur2} $\Theta^{(n,id_{n})}[a;c_{1},c_{2},\ldots,c_{n}]$ is positive;
\item\label{con:schur3} $\Theta^{(n,id_{n})}[a;c_{1},c_{2},\ldots,c_{n}]$ is completely positive.
\end{enumerate}
\end{proposition}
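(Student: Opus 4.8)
The plan is to first unwind the definitions in the special case $\sigma=id_{n}$. Since $id_{n}(i)=i$ for every $i$, the map collapses to a diagonal Schur-type map, namely
$$\Theta^{(n,id_{n})}[a;c_{1},\ldots,c_{n}]\big((x_{ij})\big)=\mathrm{diag}\big((a+c_{1})x_{11},\ldots,(a+c_{n})x_{nn}\big)-(x_{ij}),$$
so on the diagonal it multiplies $x_{ii}$ by $a+c_{i}$ and off the diagonal it simply negates. With this formula in hand I would prove the chain $(\ref{con:schur3})\Rightarrow(\ref{con:schur2})\Rightarrow(\ref{con:schur1})\Rightarrow(\ref{con:schur3})$. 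The implication $(\ref{con:schur3})\Rightarrow(\ref{con:schur2})$ is immediate, since complete positivity implies positivity.

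For $(\ref{con:schur2})\Rightarrow(\ref{con:schur1})$ I would evaluate positivity at a single well-chosen rank-one projection. Put $\mathbf{1}=(1,1,\ldots,1)^{t}$ and let $\xi_{0}=\tfrac{1}{\sqrt{n}}\mathbf{1}$, so that $P=\xi_{0}\xi_{0}^{*}$ has every entry equal to $1/n$. The formula above then gives $\Theta^{(n,id_{n})}(P)=\tfrac{1}{n}\big(\mathrm{diag}(a+c_{1},\ldots,a+c_{n})-\mathbf{1}\mathbf{1}^{t}\big)=\tfrac{1}{n}A$; hence positivity of $\Theta^{(n,id_{n})}$ forces $A\geq0$. (Alternatively one may quote Lemma~\ref{le:inv} to see that $A=\mathrm{diag}(a+c_{i})-\mathbf{1}\mathbf{1}^{t}\geq0$ is equivalent to $\sum_{i=1}^{n}(a+c_{i})^{-1}\leq1$, which is consistent with Theorem~\ref{th:positive} and the Remark, since $id_{n}$ is not a cycle of length $n$ when $n\geq2$.)

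For $(\ref{con:schur1})\Rightarrow(\ref{con:schur3})$ I would compute the Choi matrix. From $\Theta^{(n,id_{n})}(E_{ii})=(a+c_{i}-1)E_{ii}$ and $\Theta^{(n,id_{n})}(E_{ij})=-E_{ij}$ for $i\neq j$ one gets
$$C_{\Theta^{(n,id_{n})}}=\sum_{i=1}^{n}(a+c_{i}-1)\,E_{ii}\otimes E_{ii}-\sum_{i\neq j}E_{ij}\otimes E_{ij}.$$
Each summand annihilates every vector $e_{k}\otimes e_{l}$ with $k\neq l$ and maps the subspace $\mathcal{D}=\mathrm{span}\{e_{i}\otimes e_{i}:1\leq i\leq n\}$ into itself; identifying $e_{i}\otimes e_{i}$ with the $i$-th coordinate vector, the block of $C_{\Theta^{(n,id_{n})}}$ on $\mathcal{D}$ has diagonal entries $a+c_{i}-1$ and all off-diagonal entries $-1$, i.e.\ it is exactly $A$, while the block on $\mathcal{D}^{\perp}$ is $0$. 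Therefore $C_{\Theta^{(n,id_{n})}}\geq0$ if and only if $A\geq0$, and by Choi's theorem \cite{Choi7510} this is equivalent to complete positivity of $\Theta^{(n,id_{n})}$, which closes the chain.

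The computations are short; the only place needing care is the Choi-matrix bookkeeping — verifying that $C_{\Theta^{(n,id_{n})}}$ is block-diagonal with respect to $\mathcal{D}\oplus\mathcal{D}^{\perp}$ with the two blocks equal to $A$ and $0$. The conceptual point that makes the whole cycle collapse is that a single flat test vector already recovers the matrix $A$, so positivity on that one projection is as strong as complete positivity.
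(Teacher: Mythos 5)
Your proof is correct, but it takes a different route from the paper. The paper's proof is a one-liner: it observes that $\Theta^{(n,id_{n})}[a;c_{1},\ldots,c_{n}](X)=A\ast X$ is exactly Schur (Hadamard) multiplication by the matrix $A$, and then cites the standard theorem on Schur product maps (Theorem 3.7 of \cite{Paulsen}), which states precisely that for such a map positivity, complete positivity, and positivity of the multiplier matrix are all equivalent. You instead give a self-contained computational argument: the implication (\ref{con:schur2})$\Rightarrow$(\ref{con:schur1}) by testing on the single flat projection $P=\tfrac{1}{n}\mathbf{1}\mathbf{1}^{t}$ (which indeed yields $\Theta^{(n,id_{n})}(P)=\tfrac{1}{n}A$), and (\ref{con:schur1})$\Rightarrow$(\ref{con:schur3}) by checking that the Choi matrix is block-diagonal, equal to $A$ on $\mathrm{span}\{e_{i}\otimes e_{i}\}$ and $0$ on its orthogonal complement, then invoking \cite{Choi7510}. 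Both computations check out, and your parenthetical reformulation via Lemma \ref{le:inv} is also consistent. In effect you have reproved the cited Schur-multiplier theorem in this special case; what your version buys is transparency and independence from the external reference, at the cost of being longer than the paper's citation-based argument.
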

\begin{proof}
Suppose that $\sigma=id_{n}$ and $X\in M_{n}$. By the definition of
 $\Theta^{(n,\sigma)}$, we can see that
$$\Theta^{(n,id_{n})}(X)=A\ast X,$$
where $A\ast X$ denotes the Schur product of $A$ and $X$. Hence, using Theorem 3.7 in \cite{Paulsen}, we get the equivalence of (\ref{con:schur1}), (\ref{con:schur2}) and (\ref{con:schur3}).
\end{proof}

For general $\sigma\in S_{n}$ with $l_{\min}(\sigma)=1$, the situation becomes more complicated. In \cite{Hou11}, Qi and Hou defined a linear map $\Delta_{(t_{1},t_{2},\ldots,t_{n})}$ in some more general environment. The following result improves Proposition 2.7 in \cite{Hou11}.

\begin{corollary}\label{cor:delta}
Let $H$ and $K$ be Hilbert spaces and let $\{f_{i}\}_{i=1}^{n}$ and $\{f'_{i}\}_{i=1}^{n}$ be any orthonormal sets of $H$ and $K$, respectively. Let $F_{ji}=f'_{j}f_{i}^{*}\in B(H,K)$ be a rank one operator such that for any $x\in H$ we have $F_{ji}(x)=\langle x, f_{i}\rangle f'_{j}$, where $\langle\cdot, \cdot\rangle$ denotes the inner product on $H$. Let $\Delta_{(t_{1},t_{2},\ldots,t_{n})}:B(H)\mapsto B(K)$ be defined by
\begin{align}\label{eq:delta}
\Delta_{(t_{1},t_{2},\ldots,t_{n})}(X)=\sum_{i=1}^{n}t_{i}F_{ii}XF_{ii}^{*}-
\left(\sum_{i=1}^{n}F_{ii}\right)X\left(\sum_{i=1}^{n}F_{ii}\right)^{*}
\end{align}
for all $X\in B(H)$. Then the following conditions are equivalent:
\begin{enumerate}[label=\textup{(\roman*)}, ref=\textup{\roman*}]
\item\label{con:de1} the matrix $$A=\left(
               \begin{array}{ccccc}
                 t_{1}-1 & -1 & \cdots & -1 \\
                 -1 & t_{2}-1 & \cdots & -1 \\
                 \vdots & \vdots & \ddots & \vdots \\
                 -1 & -1 & \cdots &  t_{n}-1\\
               \end{array}
             \right)$$ is positive;
\item\label{con:de2} $\Delta_{(t_{1},t_{2},\ldots,t_{n})}$ is positive;
\item\label{con:de3} $\Delta_{(t_{1},t_{2},\ldots,t_{n})}$ is completely positive.
\end{enumerate}
\end{corollary}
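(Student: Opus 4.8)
The plan is to reduce the statement to the case $\sigma=id_{n}$ already settled in Proposition \ref{prop: posidn}, by recognising $\Delta_{(t_{1},t_{2},\ldots,t_{n})}$ as a dilation of a Schur multiplication map. First I would introduce the isometries $V\colon\mathbb{C}^{n}\to H$ and $W\colon\mathbb{C}^{n}\to K$ determined by $Ve_{i}=f_{i}$ and $We_{i}=f'_{i}$; these are isometries (so $V^{*}V=I_{n}=W^{*}W$) precisely because $\{f_{i}\}$ and $\{f'_{i}\}$ are orthonormal sets, and one checks $F_{ii}=WE_{ii}V^{*}$. Substituting this into (\ref{eq:delta}), using $\sum_{i}F_{ii}=W\bigl(\sum_{i}E_{ii}\bigr)V^{*}=WV^{*}$ and $\sum_{i}E_{ii}YE_{ii}=\mathrm{diag}(y_{11},\ldots,y_{nn})$ for $Y=(y_{ij})\in M_{n}$, a short computation should give
\begin{align*}
\Delta_{(t_{1},t_{2},\ldots,t_{n})}(X)=W\bigl(A\ast(V^{*}XV)\bigr)W^{*},\qquad X\in B(H),
\end{align*}
where $A$ is the matrix in (\ref{con:de1}) and $\ast$ denotes the Schur product. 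Writing $\Psi\colon M_{n}\to M_{n}$ for the Schur multiplication $\Psi(Y)=A\ast Y$ and $\mathrm{Ad}_{S}(Z)=SZS^{*}$, this reads $\Delta_{(t_{1},t_{2},\ldots,t_{n})}=\mathrm{Ad}_{W}\circ\Psi\circ\mathrm{Ad}_{V^{*}}$, and since $V^{*}V=I_{n}=W^{*}W$ one also gets $\Psi=\mathrm{Ad}_{W^{*}}\circ\Delta_{(t_{1},t_{2},\ldots,t_{n})}\circ\mathrm{Ad}_{V}$.

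Next I would transfer positivity and complete positivity across these two factorisations. Each of $\mathrm{Ad}_{V},\mathrm{Ad}_{V^{*}},\mathrm{Ad}_{W},\mathrm{Ad}_{W^{*}}$, being a conjugation by a fixed operator, is completely positive and in particular positive; hence any composition involving them together with a positive (resp. completely positive) map is again positive (resp. completely positive). From the first factorisation, positivity (resp. complete positivity) of $\Psi$ implies that of $\Delta_{(t_{1},t_{2},\ldots,t_{n})}$; from the second, the converse. Thus condition (\ref{con:de2}) is equivalent to ``$\Psi$ is positive'' and (\ref{con:de3}) to ``$\Psi$ is completely positive''.

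Finally I would invoke the Schur product theorem exactly as in the proof of Proposition \ref{prop: posidn}: the Schur multiplication map $\Psi$ by the Hermitian matrix $A$ is positive if and only if it is completely positive if and only if $A\geq0$ (Theorem 3.7 of \cite{Paulsen}). Chaining this with the two equivalences of the previous paragraph closes the loop among (\ref{con:de1}), (\ref{con:de2}) and (\ref{con:de3}). I expect the main obstacle, such as it is, to be the opening computation: keeping track of adjoints so that $F_{ii}=WE_{ii}V^{*}$, expanding $\bigl(\sum_{i}F_{ii}\bigr)X\bigl(\sum_{j}F_{jj}\bigr)^{*}=W(V^{*}XV)W^{*}$ and $\sum_{i}t_{i}F_{ii}XF_{ii}^{*}=W\,\mathrm{diag}\bigl(t_{1}(V^{*}XV)_{11},\ldots,t_{n}(V^{*}XV)_{nn}\bigr)W^{*}$, and verifying that their difference is $W(A\ast V^{*}XV)W^{*}$; everything after that is formal. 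Alternatively, one may avoid citing the Schur product theorem directly: if $\min_{i}t_{i}>0$, choose $0<a<\min_{i}t_{i}$ and set $c_{i}=t_{i}-a>0$, so that $\Psi=\Theta^{(n,id_{n})}[a;c_{1},c_{2},\ldots,c_{n}]$ and Proposition \ref{prop: posidn} applies verbatim; while if some $t_{i}\leq0$ then (\ref{con:de1}) fails because a diagonal entry of $A$ is negative and (\ref{con:de2}) fails by testing $\Delta_{(t_{1},t_{2},\ldots,t_{n})}$ on the rank one projection $f_{i}f_{i}^{*}$, so that all three statements are false and hence (vacuously) equivalent.
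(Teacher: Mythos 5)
Your proposal is correct and follows essentially the same route as the paper: both reduce $\Delta_{(t_{1},t_{2},\ldots,t_{n})}$ to the $\sigma=id_{n}$ Schur-multiplication case settled in Proposition \ref{prop: posidn} via Theorem 3.7 of \cite{Paulsen}. The only difference is one of detail: the paper merely asserts that $\Delta_{(t_{1},t_{2},\ldots,t_{n})}$ ``can be identified'' with $\Theta^{(n,id_{n})}[a;c_{1},\ldots,c_{n}]$ for $t_{i}=a+c_{i}$, whereas you make that identification precise through the isometries $V,W$ and the two-sided factorisations $\Delta_{(t_{1},t_{2},\ldots,t_{n})}=\mathrm{Ad}_{W}\circ\Psi\circ\mathrm{Ad}_{V^{*}}$ and $\Psi=\mathrm{Ad}_{W^{*}}\circ\Delta_{(t_{1},t_{2},\ldots,t_{n})}\circ\mathrm{Ad}_{V}$, and you additionally dispose of the case where some $t_{i}\leq 0$, which the paper's identification (requiring positive $a,c_{i}$) silently excludes.
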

\begin{proof}
Since $\Delta_{(t_{1},t_{2},\ldots,t_{n})}$ is a finite rank elementary operator \cite{Hou11}, it is not hard to see that if we let $t_{i}=a+c_{i}$ for $i=1,2,\ldots,n$ we can identify it with $\Theta^{(n,id_{n})}[a;c_{1},c_{2},\ldots,c_{n}]$. By Proposition \ref{prop: posidn}, we obtain the equivalence of (\ref{con:de1}), (\ref{con:de2}) and (\ref{con:de3}).
\end{proof}

\section{Atomicity and decomposability}\label{se:atomic}

In this section we discuss when $\Theta^{(n,\sigma)}$ is atomic and decomposable. Let $\phi:M_n\mapsto M_n$ be a linear map. In \cite{Osaka}, Osaka defined a real linear map $\tilde{\phi}:M_n(\mathbb{R})\mapsto M_n(\mathbb{R})$ by
$$\tilde{\phi}(x)=\frac{1}{2}\left(\phi(x)+\overline{\phi(x)}\right),\quad x=(x_{ij})\in M_n(\mathbb{R}),$$
where $\overline{(y_{ij})}=(\overline{y}_{ij})$ for $y=(y_{ij})\in M_n$. It is not hard to see that if $\phi$ is $k$-positive or $k$-copositive, then so is $\tilde{\phi}$ for $k= 1,2,\ldots$. The following lemma indicates that when $k=2$ and $l_{\min}(\sigma)\geq 2$ the converse is also true for $\Theta^{(n,\sigma)}$.

\begin{lemma}\label{2ptilde} Let $a,c_1,c_2,\ldots,c_n$ be positive real numbers. For $\sigma\in S_{n}$ with $l_{\min}(\sigma)\geq 2$, if $\tilde{\Theta}^{(n,\sigma)}[a;c_1,c_2,\ldots,c_n]$ is 2-positive, then $a\geq n$, and so $\Theta^{(n,\sigma)}[a;c_1,c_2,\ldots,c_n]$ is 2-positive.
\end{lemma}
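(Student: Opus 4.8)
The plan is to follow exactly the strategy used in the proof of $(\ref{cp2p2})\Rightarrow(\ref{cp2p3})$ in Theorem \ref{th:cp2p}, but replacing the complex rank-one projection $P=\xi\xi^{*}$ by a \emph{real} rank-one projection, so that applying $\Delta^{(n,\sigma)}$ and then symmetrizing (taking $\tilde{\Theta}^{(n,\sigma)}$) produces no change. Concretely, I would choose $x,y\in\mathbb{R}^{n}$ with $\|(x,y)^{t}\|=1$, for instance the very same choice $x_{i}=\alpha i$, $y_{i}=\alpha$ with $\alpha=(\tfrac{n(n+1)(2n+1)}{6}+n)^{-1/2}$ used in Theorem \ref{th:cp2p}. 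Then $P=\xi\xi^{*}$ has real entries, and since $\Delta^{(n,\sigma)}$ has real (in fact nonnegative) coefficients, $\Delta^{(n,\sigma)}_{2}(P)$ and $\Theta^{(n,\sigma)}_{2}(P)$ are real matrices. Hence $\overline{\Theta^{(n,\sigma)}_{2}(P)}=\Theta^{(n,\sigma)}_{2}(P)$, so $\tilde{\Theta}^{(n,\sigma)}_{2}(P)=\Theta^{(n,\sigma)}_{2}(P)$.

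Next, using the hypothesis that $\tilde{\Theta}^{(n,\sigma)}$ is $2$-positive, we get $\tilde{\Theta}^{(n,\sigma)}_{2}(P)\geq 0$, hence $\Theta^{(n,\sigma)}_{2}(P)\geq 0$, i.e.\ $\Delta^{(n,\sigma)}_{2}(P)\geq P$. From this point the argument is verbatim the one in Theorem \ref{th:cp2p}: because $l_{\min}(\sigma)\geq 2$ the permutation $\sigma$ is fixed-point free, the chosen $x,y$ make each $2\times2$ block
$$A_{i}=\begin{pmatrix} a|x_{i}|^{2}+c_{i}|x_{\sigma(i)}|^{2} & ax_{i}\bar y_{i}+c_{i}x_{\sigma(i)}\bar y_{\sigma(i)}\\ a\bar x_{i}y_{i}+c_{i}\bar x_{\sigma(i)}y_{\sigma(i)} & a|y_{i}|^{2}+c_{i}|y_{\sigma(i)}|^{2}\end{pmatrix}$$
invertible, so $\Delta^{(n,\sigma)}_{2}(P)=\sum_{i}A_{i}\otimes E_{ii}$ is invertible, and the computation $\langle A_{i}^{-1}z_{i},z_{i}\rangle=\tfrac1a$ from (\ref{cp2p-1/a}) gives, via Lemma \ref{le:inv}, $\langle\Delta^{(n,\sigma)}_{2}(P)^{-1}\xi,\xi\rangle=\tfrac{n}{a}\leq 1$. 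Thus $a\geq n$. Finally, $a\geq n$ together with $l_{\min}(\sigma)\geq 2$ gives, by the $(\ref{cp2p3})\Rightarrow(\ref{cp2p1})$ direction of Theorem \ref{th:cp2p}, that $\Theta^{(n,\sigma)}[a;c_1,\ldots,c_n]$ is completely positive, in particular $2$-positive.

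The only genuinely new observation — and the place to be careful — is the very first step: verifying that the symmetrization $\tilde{\phi}$ really agrees with $\phi$ on real inputs when $\phi=\Theta^{(n,\sigma)}$. This holds because $\Delta^{(n,\sigma)}[a;c_1,\ldots,c_n]$ is defined by the real coefficients $a,c_1,\ldots,c_n$, so it maps real matrices to real matrices; hence $\Theta^{(n,\sigma)}$ and $\Theta^{(n,\sigma)}_2$ do too, and $\overline{\Theta^{(n,\sigma)}_2(P)}=\Theta^{(n,\sigma)}_2(P)$ for the real projection $P$. Everything else is a direct quotation of the estimates already carried out in Theorem \ref{th:cp2p}, so no new calculation is needed; I would simply refer back to (\ref{cp2p-de2p})--(\ref{cp2p-1/a}) rather than reproduce them.
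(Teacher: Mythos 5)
Your proposal is correct and follows essentially the same route as the paper: the paper's proof likewise observes that $\tilde{\Theta}^{(n,\sigma)}$ agrees with $\Theta^{(n,\sigma)}$ on $M_n(\mathbb{R})$, reuses the real vectors $x_i=\alpha i$, $y_i=\alpha$ from the proof of Theorem \ref{th:cp2p} to run that argument verbatim for $\tilde{\Theta}^{(n,\sigma)}$ and obtain $a\geq n$, and then invokes Theorem \ref{th:cp2p} to conclude $2$-positivity. Your extra care in justifying why $\tilde{\Theta}^{(n,\sigma)}_2(P)=\Theta^{(n,\sigma)}_2(P)$ for a real projection $P$ is exactly the point the paper treats as "clear."
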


\begin{proof}
It is clear that $\Theta^{(n,\sigma)}(x)=\tilde{\Theta}^{(n,\sigma)}(x)$ for $x\in M_n(\mathbb{R})$. Suppose that $\tilde{\Theta}^{(n,\sigma)}$ is $2$-positive. In the proof of Theorem \ref{th:cp2p}, for $i=1,2,\ldots,n$ we can choose real numbers $x_i$, $y_i$ such that each $A_i$ is invertible. Thus if we apply the proof of Theorem \ref{th:cp2p} to $\tilde{\Theta}^{(n,\sigma)}$, we can also get that $a\geq n$. So $\Theta^{(n,\sigma)}$ is $2$-positive by Theorem \ref{th:cp2p}.
\end{proof}

\begin{lemma}\label{2psumcop}
Suppose that $\sigma\in S_{n}$ $(n\geq 3)$ and $l_{\min}(\sigma)\geq 3$.
Let $\phi:M_n\mapsto M_n$ be a positive linear map. Suppose that $\{\phi(E_{ij})\}_{i,j=1}^{n}$ satisfy the following conditions:
\begin{enumerate}[label=\textup{(\roman*)}, ref=\textup{\roman*}]
\item\label{2psumcop1} $\phi(E_{ii})e_{j}=e_{j}^{*}\phi(E_{ii})=0$ for each $1 \leq i \leq n$ and $j\in\{1,2,\ldots,n\}\backslash \{i, \sigma^{-1}(i)\}$;
\item\label{2psumcop2}$\phi(E_{ij})=-E_{ij}$ for $1\leq i\neq j\leq n$.
\end{enumerate}
If $\phi=\varphi+\psi$, where $\varphi$ is a 2-positive linear map and $\psi$ is a 2-copositive linear map, then $\tilde{\phi}:M_n(\mathbb{R})\mapsto M_n(\mathbb{R})$ is a 2-positive linear map.
\end{lemma}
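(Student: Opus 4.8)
The plan is to show that the decomposition $\phi = \varphi + \psi$, combined with the structural constraints (\ref{2psumcop1}) and (\ref{2psumcop2}) on $\phi(E_{ij})$, forces $\psi$ (equivalently its 2-copositive part) to act trivially on the off-diagonal entries relevant to $\tilde\phi$, so that $\tilde\phi$ essentially inherits 2-positivity from $\varphi$. First I would pass to Choi matrices: write $C_\varphi = \sum_{i,j} E_{ij}\otimes\varphi(E_{ij})$ and $C_\psi = \sum_{i,j}E_{ij}\otimes\psi(E_{ij})$, so that $C_\varphi\geq 0$ (by 2-positivity, hence in particular positivity, of $\varphi$; but I will actually want the stronger 2-positive Choi-type block positivity) and $C_{\boldsymbol{\mathrm{T}}\circ\psi}\geq 0$. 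The identity $C_\phi = C_\varphi + C_\psi$ together with (\ref{2psumcop2}) pins down the off-diagonal blocks: for $i\neq j$, $\varphi(E_{ij}) + \psi(E_{ij}) = -E_{ij}$.

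The key step is to exploit condition (\ref{2psumcop1}), which says each $\phi(E_{ii})$ is supported (as a matrix) on the two indices $i$ and $\sigma^{-1}(i)$. Since $\varphi$ and $\psi$ are both positive and $\phi(E_{ii}) = \varphi(E_{ii}) + \psi(E_{ii})$ with all three matrices positive semidefinite, the support condition on $\phi(E_{ii})$ forces $\varphi(E_{ii})$ and $\psi(E_{ii})$ to be supported on $\{i,\sigma^{-1}(i)\}$ as well. Now the hypothesis $l_{\min}(\sigma)\geq 3$ (so $n\geq 3$, $\sigma$ has no fixed points and no $2$-cycles) is what makes this rigid: running over $i$, the pairs $\{i,\sigma^{-1}(i)\}$ never coincide with a pair $\{j,\sigma^{-1}(j)\}$ for $i\neq j$ in a way that would let an off-diagonal contribution $E_{ij}$ be ``absorbed'' compatibly by both a 2-positive and a 2-copositive summand. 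Concretely I would look at the $2\times 2$ principal submatrices (in the tensor-factor $M_n$) of $C_\varphi$ and $C_{\boldsymbol{\mathrm{T}}\circ\psi}$ indexed by suitable pairs of basis vectors of the form $e_i\otimes e_k$, and use $2\times 2$ positivity (the off-diagonal entry is bounded by the geometric mean of the diagonal entries, which here vanish) to conclude that $\varphi(E_{ij})$ and $\psi(E_{ij})$ have a forced form — in particular that the ``real part'' map $\tilde\psi$ contributes nothing obstructive.

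From there the argument closes quickly: one checks that $\tilde\phi = \tilde\varphi + \tilde\psi$, that $\tilde\varphi$ is 2-positive (since $\varphi$ is), and that under the constraints just derived $\tilde\psi$ is itself 2-positive on $M_n(\mathbb{R})$ (a 2-copositive map whose Choi matrix, after the support reduction, already has the block-positivity needed for 2-positivity — because transposition acts trivially on the surviving $2\times 2$ blocks, or those blocks are diagonal/real-symmetric). Hence $\tilde\phi$ is a sum of two 2-positive real maps, so it is 2-positive. I expect the main obstacle to be the bookkeeping in the second step: making precise exactly which $2\times 2$ minors of the two Choi matrices to test, and verifying that $l_{\min}(\sigma)\geq 3$ (rather than merely $\geq 2$) is genuinely used — presumably because a transposition $\sigma$ would allow $E_{ij} + E_{ji}$-type symmetric off-diagonal blocks that a 2-copositive map can carry without being 2-positive, which is exactly the phenomenon the hypothesis rules out. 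Care is also needed because the conclusion is only about $\tilde\phi$ (the real-symmetrized map), not $\phi$ itself, so the argument must genuinely live in $M_n(\mathbb{R})$ and cannot invoke complex phases.
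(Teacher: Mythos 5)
Your proposal follows essentially the same route as the paper's proof: positivity together with condition (\ref{2psumcop1}) localizes the supports of $\varphi(E_{ii})$ and $\psi(E_{ii})$ to $\{i,\sigma^{-1}(i)\}$, the $2\times 2$ block-positivity coming from 2-positivity of $\varphi$ and 2-copositivity of $\psi$ then localizes the entries of $\varphi(E_{ij})$ and $\psi(E_{ij})$, and $l_{\min}(\sigma)\geq 3$ rules out the index coincidences that would otherwise let $\psi(E_{ij})$ ($i\neq j$) carry a nondiagonal (symmetric $E_{ij}+E_{ji}$-type) part, so $\psi(E_{ij})$ is forced to be diagonal, $\tilde{\psi}(X)=\tilde{\psi}(X^{t})$ on $M_n(\mathbb{R})$, and the 2-copositivity of $\tilde{\psi}$ upgrades to 2-positivity, giving $\tilde{\phi}=\tilde{\varphi}+\tilde{\psi}$ 2-positive. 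One caveat: your parenthetical assertion that $C_{\varphi}\geq 0$ follows from (2-)positivity of $\varphi$ is false (positivity of the Choi matrix is equivalent to complete positivity), but this does not affect your argument, since what you actually invoke is the positivity of the $2n\times 2n$ matrices $\left(\begin{smallmatrix}\varphi(E_{ii}) & \varphi(E_{ij})\\ \varphi(E_{ji}) & \varphi(E_{jj})\end{smallmatrix}\right)$ and their copositive counterparts, exactly as in the paper.
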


\begin{proof}
First, we show that $\psi(E_{ij})$ is a diagonal matrix for $ i\neq j$.
Since $\varphi$ is 2-positive and $\psi$ is 2-copositive, we have
\begin{align}
\left(\begin{array}{cc}
\varphi(E_{ii})&\varphi(E_{ij}) \\
\varphi(E_{ji})& \varphi(E_{jj})\\
\end{array}\right)\geq 0\quad\mbox{ and }\quad
\left(\begin{array}{cc}
\psi(E_{ii})&\psi(E_{ji}) \\
\psi(E_{ij})& \psi(E_{jj})\\
\end{array}\right)\geq 0.\label{2psumcop-pos}
\end{align}
Let $j\in\{1,2,\ldots,n\}\backslash \{i, \sigma^{-1}(i)\}$ and $1\leq i \leq n$. By condition (\ref{2psumcop1}), we have
\begin{align*}
\langle\phi(E_{ii})e_{j},e_{j}\rangle&=\langle(\varphi(E_{ii})+\psi(E_{ii}))e_{j},e_{j}\rangle\\
&=e_{j}^{*}\phi(E_{ii})e_{j}=0.
\end{align*}
Using the positivity of $\varphi$ and $\psi$, we obtain that $e_{j}^{*}\varphi(E_{ii})e_{j}=0$ and $e_{j}^{*}\psi(E_{ii})e_{j}=0$. Hence $\varphi$ and $\psi$ also satisfy condition (\ref{2psumcop1}).

Note that if $\left(
\begin{array}{cc}
x&y\\
\bar{y}&z\\
\end{array}
\right)\in M_2$ is positive and $x=0$ or $z=0$, then we must have $y=0$; any principal
submatrix of a positive matrix must be a positive matrix.
So from condition (\ref{2psumcop1}), we can see that the nonzero elements in the $n\times n$ matrices $\varphi(E_{ii})$ and $\psi(E_{ii})$ can only appear in these positions: $(i,i)$, $(i,\sigma^{-1}(i))$, $(\sigma^{-1}(i),i)$ and $(\sigma^{-1}(i),\sigma^{-1}(i))$.
From (\ref{2psumcop-pos}), for $i\neq j$ we can see that the nonzero elements of the $n\times n$ matrix $\varphi(E_{ij})$ can only appear in the positions: $(i,j)$, $(i,\sigma^{-1}(j))$, $(\sigma^{-1}(i),j)$ and $(\sigma^{-1}(i),\sigma^{-1}(j))$; the nonzero elements of the $n\times n$ matrix $\psi(E_{ij})$ can only appear in the positions: $(j,i)$, $(j,\sigma^{-1}(i))$, $(\sigma^{-1}(j),i)$ and $(\sigma^{-1}(j),\sigma^{-1}(i))$.
Hence for $1\leq i,j \leq n$ we have
\begin{align*}
\varphi(E_{ij})=&y_{ij}E_{ij}+y_{i,\sigma^{-1}(j)}E_{i,\sigma^{-1}(j)}+y_{\sigma^{-1}(i),j}E_{\sigma^{-1}(i),j}\nonumber\\
&+y_{\sigma^{-1}(i),\sigma^{-1}(j)}E_{\sigma^{-1}(i),\sigma^{-1}(j)}
\end{align*}
and
\begin{align}
\psi(E_{ij})=& z_{ji}E_{ji}+z_{\sigma^{-1}(j),i}E_{\sigma^{-1}(j),i}+z_{j,\sigma^{-1}(i)}E_{j,\sigma^{-1}(i)}\nonumber\\
&+z_{\sigma^{-1}(j),\sigma^{-1}(i)}E_{\sigma^{-1}(j),\sigma^{-1}(i)},\label{psi_ij}
\end{align}
where all $y$'s and $z$'s above are complex numbers.

For $i\neq j$, by condition (\ref{2psumcop2}) we have
\begin{align}
\phi(E_{ij})=&\varphi(E_{ij})+\psi(E_{ij})\nonumber\\
=&y_{ij}E_{ij}+y_{i,\sigma^{-1}(j)}E_{i,\sigma^{-1}(j)}
+y_{\sigma^{-1}(i),j}E_{\sigma^{-1}(i),j}
+y_{\sigma^{-1}(i),\sigma^{-1}(j)}E_{\sigma^{-1}(i),\sigma^{-1}(j)}+\nonumber\\
&z_{ji}E_{ji}+z_{\sigma^{-1}(j),i}E_{\sigma^{-1}(j),i}
+z_{j,\sigma^{-1}(i)}E_{j,\sigma^{-1}(i)}+
z_{\sigma^{-1}(j),\sigma^{-1}(i)}E_{\sigma^{-1}(j),\sigma^{-1}(i)}
\label{2psumcop-ij}\\
=&-E_{ij}\nonumber.
\end{align}
If $\psi(E_{ij})=0$, then clearly $\psi(E_{ij})$ is diagonal. Suppose that $\psi(E_{ij})\neq 0$. Since $\{E_{ij}\}_{1\leq i,j\leq n}$ are linear independent,  by comparing indices in (\ref{2psumcop-ij}) it can only happen that
\begin{enumerate}[label=\textup{(\arabic*)}, ref=\textup{\arabic*}]
\item\label{case1} $\sigma^{-1}(j)=i$ and $\sigma^{-1}(i)\neq j$;
\item\label{case2} $\sigma^{-1}(i)=j$ and $\sigma^{-1}(j)\neq i$;
\item\label{case3} $\sigma^{-1}(j)=i$ and $\sigma^{-1}(i)=j$;
\item\label{case4} $\sigma^{-1}(j)=j$ or $\sigma^{-1}(i)=i$.
\end{enumerate}
Suppose that condition (\ref{case1}) holds. From (\ref{2psumcop-ij}) it is not hard to see that $z_{\sigma^{-1}(j),i}=-y_{i,\sigma^{-1}(j)}\neq 0$ and  $z_{ji}=z_{j,\sigma^{-1}(i)}=z_{\sigma^{-1}(j),\sigma^{-1}(i)}=0$. So from (\ref{psi_ij}) we can see that $\psi(E_{ij})$ is diagonal. Similarly, $\psi(E_{ij})$ is also diagonal if condition $(\ref{case2})$ holds.

Since $l_{\min} (\sigma)\geq 3$, condition (\ref{case3}) and condition (\ref{case4}) cannot happen. If condition (\ref{case3}) holds, then $\sigma(j)=i$ and $\sigma(i)=j$. Thus there exists a cycle of length $2$ in the disjoint cycle decomposition of $\sigma$. So we have $l_{\min}(\sigma)\leq 2$ which is contradict to our assumption. Similarly, we can see that condition (\ref{case4}) cannot happen. Thus we can see that $\psi(E_{ij})$ are diagonal matrices for all $1\leq i\neq j\leq n$. Hence $\psi(E_{ij})^{t}=\psi(E_{ij})$ for all $1\leq i\neq j \leq n$.

Next, we show that $\tilde{\psi}$ is 2-positive. Since $\psi$ is positive, $\psi(x^{*})=\psi(x)^{*}$ for any $x\in M_{n}$. From discussions above, we have
\begin{align}
&\psi(E_{ij})=\psi(E_{ji}^{*})=\psi(E_{ji})^{*}=\overline{\psi(E_{ji})^{t}}=
\overline{\psi(E_{ji})}
\quad\text{for all $1\leq i\neq j\leq n$}; \label{psij}\\
&\psi(E_{ii})=\psi(E_{ii})^{*}=\overline{\psi(E_{ii})^{t}}\quad\text{for $1\leq i \leq n$}\label{psii}.
\end{align}
For each $(x_{ij})\in M_n(\mathbb{R})$, from (\ref{psij}) and (\ref{psii}) we have
\begin{align*}
\tilde{\psi}((x_{ij}))&=\frac{1}{2}\left(\psi((x_{ij})) +\overline{\psi((x_{ij}))} \right)=\frac{1}{2}\left(\sum_{i,j=1}^{n}x_{ij}\psi(E_{ij}) +\sum_{i,j=1}^{n}x_{ij}\overline{\psi(E_{ij})} \right)\\
&=\frac{1}{2}\left(\sum_{i=1}^{n}x_{ii}(\psi(E_{ii})+\psi(E_{ii})^{t})+
\sum_{1\leq i\neq j \leq n}x_{ij}(\psi(E_{ij}) + \psi(E_{ji}))\right)
\end{align*}
and
\begin{align*}
\tilde{\psi}((x_{ij})^{t})&=\frac{1}{2}\left(\psi((x_{ji})) +\overline{\psi((x_{ji}))} \right)=\frac{1}{2}\left(\sum_{i,j=1}^{n}x_{ji}\psi(E_{ij}) +\sum_{i,j=1}^{n}x_{ji}\overline{\psi(E_{ij})} \right)\\
&=\frac{1}{2}\left(\sum_{i=1}^{n}x_{ii}(\psi(E_{ii})+\psi(E_{ii})^{t})+
\sum_{1\leq i\neq j \leq n}x_{ji}(\psi(E_{ij}) + \psi(E_{ji}))\right).
\end{align*}
So we have
\begin{align}
\tilde{\psi}(X)=\tilde{\psi}(X^{t}),\label{psixt}
\end{align}
for any $X\in M_n(\mathbb{R})$.

Now for each $\left(
\begin{array}{cc}
X&Y\\
Y^{t}&Z\\
\end{array}
\right)\geq 0$ in $M_2(M_n(\mathbb{R}))$, we have
\begin{align*}
\tilde{\psi}_{2}\left(
\begin{array}{cc}
X&Y\\
Y^{t}&Z\\
\end{array}
\right)&=\left(
\begin{array}{cc}
\tilde{\psi}(X)&\tilde{\psi}(Y)\\
\tilde{\psi}(Y^{t})&\tilde{\psi}(Z)\\
\end{array}
\right)
=\left(
\begin{array}{cc}
\tilde{\psi}(X)&\tilde{\psi}(Y^{t})\\
\tilde{\psi}(Y)&\tilde{\psi}(Z)\\
\end{array}
\right)\\
&=\tilde{\psi}^{2}\left(
\begin{array}{cc}
X&Y\\
Y^{t}&Z\\
\end{array}
\right)
\geq 0,
\end{align*}
where the second equality is followed from (\ref{psixt}), and the last inequality is followed from the 2-copositivity of $\tilde{\psi}$. So $\tilde{\psi}$ is 2-positive.

Since $\tilde{\phi}=\tilde{\varphi}+\tilde{\psi}$ and both $\tilde{\varphi}$ and $\tilde{\psi}$ are 2-positive, we have that $\tilde{\phi}$ is 2-positive.
\end{proof}

Suppose that $\phi:M_{n}\mapsto M_{n}$ is a  positive linear map. $\phi$ is said to be {\it atomic} if $\phi$ can not be decomposed into a sum of a 2-positive map and a 2-copositive map. If $\phi$ can be decomposed into sums of completely positive maps and completely copositive maps, then $\phi$ is said to be {\it decomposable}, otherwise, $\phi$ is said to be {\it indecomposable}. Let $1_{k}$ denote the identity map on $M_{k}$ and $\boldsymbol{\mathrm{T}}$ denote the transpose map on $M_{n}$, the {\it partial transpose} $X^{\Gamma}$ of a matrix $X$ in $M_{k}\otimes M_{n}$ is defined by
$$X^{\Gamma}=(1_{k}\otimes\boldsymbol{\mathrm{T}})(X).$$
It is not hard to see that $\phi$ is decomposable if and only if $C_{\phi}$ can be decomposed as sums of positive matrices and matrices whose partial transpose are positive.

\begin{theorem}\label{thm:atomic}
Let $a,c_1,c_{2},\ldots,c_n$ ($n\geq 3$) be positive real numbers. Suppose that $\sigma\in S_{n}$ and $l_{\min}(\sigma)\geq 3$. If $\Theta^{(n,\sigma)}[a;c_{1},c_{2},\ldots,c_{n}]$ is positive but not completely positive, then $\Theta^{(n,\sigma)}[a;c_{1},c_{2},\ldots,c_{n}]$ is atomic. Particularly, if $n>a\geq \max\{n-1, n-(c_{1}c_{2}\cdots c_{n})^{\frac{1}{n}}\}$, then $\Theta^{(n,\sigma)}[a;c_{1},c_{2},\ldots,c_{n}]$ is atomic.
\end{theorem}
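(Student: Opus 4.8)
The plan is to argue by contradiction, using Lemma \ref{2psumcop} together with Lemma \ref{2ptilde} and Theorem \ref{th:cp2p}. Suppose $\Theta^{(n,\sigma)}[a;c_{1},c_{2},\ldots,c_{n}]$ is positive but \emph{not} atomic, so that we may write $\Theta^{(n,\sigma)}=\varphi+\psi$ where $\varphi$ is $2$-positive and $\psi$ is $2$-copositive. I would first verify that $\phi:=\Theta^{(n,\sigma)}$ satisfies hypotheses (\ref{2psumcop1}) and (\ref{2psumcop2}) of Lemma \ref{2psumcop}.

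Condition (\ref{2psumcop2}) is immediate: for $i\neq j$ the matrix $E_{ij}$ has zero diagonal, so $\Delta^{(n,\sigma)}(E_{ij})=0$ and hence $\Theta^{(n,\sigma)}(E_{ij})=-E_{ij}$. For condition (\ref{2psumcop1}), a direct computation from the definition of $\Delta^{(n,\sigma)}$ gives $\Theta^{(n,\sigma)}(E_{ii})=(a-1)E_{ii}+c_{\sigma^{-1}(i)}E_{\sigma^{-1}(i),\sigma^{-1}(i)}$, which is a diagonal matrix whose only possibly nonzero entries lie in the $(i,i)$ and $(\sigma^{-1}(i),\sigma^{-1}(i))$ positions. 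Since $l_{\min}(\sigma)\geq 3$, in particular $\sigma$ has no fixed point, so $\sigma^{-1}(i)\neq i$ and these two positions are distinct; consequently $\phi(E_{ii})e_{j}=e_{j}^{*}\phi(E_{ii})=0$ for every $j\in\{1,2,\ldots,n\}\backslash\{i,\sigma^{-1}(i)\}$. Thus both conditions hold.

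Now Lemma \ref{2psumcop} applies (we have $n\geq 3$ and $l_{\min}(\sigma)\geq 3$) and yields that $\tilde{\Theta}^{(n,\sigma)}$ is $2$-positive. Since $l_{\min}(\sigma)\geq 3\geq 2$, Lemma \ref{2ptilde} then forces $a\geq n$, and Theorem \ref{th:cp2p} gives that $\Theta^{(n,\sigma)}$ is completely positive, contradicting the hypothesis. Hence $\Theta^{(n,\sigma)}$ is atomic. For the last assertion, if $n>a\geq\max\{n-1,n-(c_{1}c_{2}\cdots c_{n})^{\frac{1}{n}}\}$, then $\Theta^{(n,\sigma)}$ is positive by Theorem \ref{th:positive}, and since $a<n$ it is not completely positive by Theorem \ref{th:cp2p}; the first part then applies.

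The only real subtlety is bookkeeping: one must notice that Lemma \ref{2psumcop} is tailored precisely to digest the structural identities $\phi(E_{ij})=-E_{ij}$ ($i\neq j$) and the two-point support of each $\phi(E_{ii})$, and that the hypothesis $l_{\min}(\sigma)\geq 3$ gets used twice — once here to guarantee $\sigma^{-1}(i)\neq i$ so that condition (\ref{2psumcop1}) holds, and once inside Lemma \ref{2psumcop} to rule out the $2$-cycle and fixed-point cases that would otherwise permit $\psi(E_{ij})$ to be non-diagonal. All of the genuinely hard analytic content has already been carried out in Lemmas \ref{2ptilde} and \ref{2psumcop} (and in Theorem \ref{th:cp2p}), so the present argument is essentially an assembly of those results.
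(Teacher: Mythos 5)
Your proposal is correct and follows essentially the same route as the paper: verify that $\Theta^{(n,\sigma)}$ satisfies the hypotheses of Lemma \ref{2psumcop}, deduce that $\tilde{\Theta}^{(n,\sigma)}$ is $2$-positive, invoke Lemma \ref{2ptilde} to force $a\geq n$, and contradict non-complete-positivity via Theorem \ref{th:cp2p} (the paper merely notes $a<n$ at the outset rather than at the end, a trivial reordering). Your explicit verification of conditions (\ref{2psumcop1}) and (\ref{2psumcop2}) is slightly more detailed than the paper's, but the argument is the same.
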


\begin{proof} Since $l_{\min}(\sigma)\geq 3$ and $\Theta^{(n,\sigma)}$ is positive but not completely positive, we have that $a<n$ by Theorem \ref{th:cp2p}.

Assume that $\Theta^{(n,\sigma)}=\varphi+\psi$, where $\varphi$ is 2-positive and $\psi$ is 2-copositive. For $1\leq i,j\leq n$, we have
\begin{equation*}
\Theta^{(n,\sigma)}(E_{ij})=\left\{\begin{aligned}
&(a-1)E_{ii}+c_{\sigma^{-1}(i)}E_{\sigma^{-1}(i),\sigma^{-1}(i)}\quad &\text{if}\ i=j\\
&-E_{ij}                                \quad &\text{if}\; i\neq j
                           \end{aligned} \right..
\end{equation*}
Hence $\{\Theta^{(n,\sigma)}(E_{ij})\}_{i,j=1}^{n}$ satisfy conditions in Lemma \ref{2psumcop}, and so $\tilde{\Theta}^{(n,\sigma)}$ is 2-positive. By Lemma \ref{2ptilde}, we have that $a\geq n$ which is a contradiction. Hence $\Theta^{(n,\sigma)}$ is atomic.

Particularly, if $n>a\geq \max\{n-1, n-(c_{1}c_{2}\cdots c_{n})^{\frac{1}{n}}\}$, from Theorem \ref{th:positive} and Theorem \ref{th:cp2p} we can see that $\Theta^{(n,\sigma)}$ is positive but not completely positive. Thus $\Theta^{(n,\sigma)}$ is atomic.
\end{proof}

\begin{remark}
For $\sigma\in S_{n}$, if $\sigma^{2}=id_{n}$, then the lengths of cycles in the disjoint cycle decomposition of $\sigma$ are not greater than 2, that is, $l_{\max}(\sigma)\leq 2$ and $l_{\min}(\sigma)\leq 2$. In Proposition 7.2 of \cite{HouLi12}, Hou, Li et al. showed that if $\sigma^{2}=id_{n}$, then $\Theta^{(n,\sigma)}[n-1;1,1,\ldots,1]$ is decomposable. In Proposition \ref{prop:decomposable} below, for $\sigma^{2}=id_{n}$ we also obtain a class of decomposable maps of the form $\Theta^{(n,\sigma)}[a;c_{1},c_{2},\ldots,c_{n}]$.  Hence for $l_{\min}(\sigma)\leq 2$, there exist positive linear maps of the form
$\Theta^{(n,\sigma)}[a;c_{1},c_{2},\ldots,c_{n}]$ which are decomposable and hence not atomic.
\end{remark}

Since $(n,n-1)=1$, from Lemma \ref{le:length} we have that $l_{\min}(\tau_{n-1}^{n})=n$. In Theorem \ref{thm:atomic}, if we let $\sigma=\tau_{n-1}^{n}$, then we obtain Theorem 3.2 in \cite{Ha03}. For $\Theta^{(n,\sigma)}[n-c;c,c,\ldots,c]$ $(c\geq0)$, the condition when it is positive and completely positive was discussed in \cite{HouLi12}. In the following corollary, we give conditions when it is atomic.

\begin{corollary}\label{cor:catomic}
Suppose that  $\sigma\in S_{n}$ and $0\leq c \leq \frac{n}{l_{\max}(\sigma)}$.  If $l_{\min}(\sigma)\geq 3$ and $0<c\leq \frac{n}{l_{\max}(\sigma)}$, then $\Theta^{(n,\sigma)}[n-c;c,c,\ldots,c]$ is atomic. If $c=0$, then $\Theta^{(n,\sigma)}[n-c;c,c,\ldots,c]$ is completely positive.
\end{corollary}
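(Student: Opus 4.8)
The plan is to treat the two ranges of the parameter separately and in each case reduce to results already proved. Fix $a=n-c$ and $c_1=\cdots=c_n=c$. Assume first that $0<c\le \frac{n}{l_{\max}(\sigma)}$. Since $l_{\min}(\sigma)\ge 3$ we have $n\ge 3$ and $l_{\max}(\sigma)\ge 3$, so $a=n-c\ge n-\frac{n}{3}>0$ and all the parameters are positive; I would then verify the two hypotheses of Theorem \ref{thm:atomic}. For positivity I would invoke the criterion recalled in the Remark following Theorem \ref{th:positive} (Proposition 6.2 of \cite{HouLi12}): $\Theta^{(n,\sigma)}[n-c;c,\ldots,c]$ is positive precisely when $c\le \frac{n}{l_{\max}(\sigma)}$, which is the standing hypothesis. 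Note that Theorem \ref{th:positive} alone does not suffice here: with $c_i=c$ one has $n-(c_1\cdots c_n)^{1/n}=n-c=a$, so the geometric-mean bound holds with equality, but the companion requirement $a\ge n-1$ would force $c\le 1$, which need not hold for general $\sigma$.

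For the failure of complete positivity, since $l_{\min}(\sigma)\ge 3\ge 2$, Theorem \ref{th:cp2p} says that $\Theta^{(n,\sigma)}[n-c;c,\ldots,c]$ is completely positive if and only if $n-c\ge n$, i.e.\ $c\le 0$; as $c>0$ this fails. Hence $\Theta^{(n,\sigma)}[n-c;c,\ldots,c]$ is positive but not completely positive, and Theorem \ref{thm:atomic} then gives that it is atomic.

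It remains to handle $c=0$, for which $l_{\min}(\sigma)\ge 3$ is not assumed. With $c_i=0$ for all $i$ the map no longer depends on $\sigma$: indeed $\Delta^{(n,\sigma)}[n;0,\ldots,0](X)=\text{diag}(nx_{11},\ldots,nx_{nn})$, so $\Theta^{(n,\sigma)}[n;0,\ldots,0](X)=n\,\text{diag}(x_{11},\ldots,x_{nn})-X$. I would compute its Choi matrix directly from $\Theta(E_{ii})=(n-1)E_{ii}$ and $\Theta(E_{ij})=-E_{ij}$ $(i\ne j)$, obtaining $C=n\sum_{i}E_{ii}\otimes E_{ii}-\omega\omega^{*}$ with $\omega=\sum_{i}e_i\otimes e_i$; on the span of $\{e_i\otimes e_i\}$ this equals $nI-\omega\omega^{*}$, whose eigenvalues are $0$ and $n$ (the latter with multiplicity $n-1$), and it vanishes on the orthogonal complement, so $C\ge 0$ and the map is completely positive by Choi's criterion. (If in addition $l_{\min}(\sigma)\ge 2$, this is also immediate from Theorem \ref{th:cp2p} with $a=n$.)

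There is no real obstacle here once the earlier results are available; the argument is essentially bookkeeping. The two points requiring a little care are that positivity in the regime $c>1$ is not covered by Theorem \ref{th:positive} and must be imported from \cite{HouLi12}, and that the endpoint $c=0$ has to be treated by hand because $0$ is not an admissible value of the parameters $c_i$ in Theorems \ref{th:positive}, \ref{th:cp2p} and \ref{thm:atomic}.
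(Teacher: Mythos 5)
Your proposal is correct and follows essentially the same route as the paper: positivity in the regime $0<c\le \frac{n}{l_{\max}(\sigma)}$ is imported from Proposition 6.2 of \cite{HouLi12}, failure of complete positivity follows from Theorem \ref{th:cp2p} since $n-c<n$, and atomicity then follows from Theorem \ref{thm:atomic}. The only (harmless) divergence is at the endpoint $c=0$, where you verify positivity of the Choi matrix $n\sum_i E_{ii}\otimes E_{ii}-\omega\omega^{*}$ directly, while the paper invokes Corollary \ref{cor:delta}; both reduce to the same positivity check of $nI_n-J$ (with $J$ the all-ones matrix), so the arguments are equivalent.
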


\begin{proof}
If $0<c\leq \frac{n}{l_{\max}(\sigma)}$ and $l_{\min}(\sigma)\geq 3$, then by Proposition 6.2 of \cite{HouLi12} we have that $\Theta^{(n,\sigma)}[n-c;c,c,\ldots,c]$ is positive. Since $l_{\min}(\sigma)\geq 3$ and $n-c<n$, by Theorem \ref{th:cp2p} we can see that $\Theta^{(n,\sigma)}[n-c;c,c,\ldots,c]$ is positive but not completely positive. Thus by theorem \ref{thm:atomic}, $\Theta^{(n,\sigma)}[n-c;c,c,\ldots,c]$ is atomic.

If $c=0$, then we can see that $\Theta^{(n,\sigma)}[n-c;c,c,\ldots,c]$ takes the form of (\ref{eq:delta}). By Corollary \ref{cor:delta}, it is not hard to see that $\Theta^{(n,\sigma)}[n;0,0,\ldots,0]$ is completely positive.
\end{proof}

It is clear that $\Theta^{(n,\tau_{k}^{n})}[n-1;1,1,\ldots,1]$ ($n\geq 3$, $k\in\{1,2,\ldots,n-1\}$) is the map `$\Phi^{(k)}$' defined in \cite{Hou11} if we restrict $\Phi^{(k)}$ to $M_{n}$. In \cite{Hou11}, Qi and Hou showed that if $k\neq \frac{n}{2}$, then $\Phi^{(k)}$ is indecomposable. Here we give the following result.

\begin{corollary}\label{cor:1atomic}
For each $k\in\{1,2,\ldots,n-1\}$ ($n\geq 3$), if $k\neq \frac{n}{2}$ when $n$ is even, then $\Theta^{(n,\tau_{k}^{n})}[n-1;1,1,\ldots,1]$ is atomic.
\end{corollary}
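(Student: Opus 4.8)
The plan is to reduce Corollary~\ref{cor:1atomic} to the machinery already assembled, mainly Lemma~\ref{le:length} and Theorem~\ref{thm:atomic}, with a preliminary check on the positivity hypothesis of Theorem~\ref{thm:atomic}. First I would record that $\sigma=\tau_{k}^{n}$ with $1\leq k\leq n-1$ and $k\neq\frac{n}{2}$ (when $n$ is even) falls exactly into case~(\ref{le:lengthno}) of Lemma~\ref{le:length}: since $k\neq n$ and $k\neq\frac{n}{2}$, we have $l_{\min}(\tau_{k}^{n})=l_{\max}(\tau_{k}^{n})=\frac{n}{(n,k)}\geq 3$. This is the key structural input, because Theorem~\ref{thm:atomic} requires precisely $l_{\min}(\sigma)\geq 3$.

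Next I would verify that $\Theta^{(n,\tau_{k}^{n})}[n-1;1,1,\ldots,1]$ is positive but not completely positive, so that the hypothesis of Theorem~\ref{thm:atomic} is met. Here $a=n-1$ and $c_{1}=c_{2}=\cdots=c_{n}=1$, so $(c_{1}c_{2}\cdots c_{n})^{1/n}=1$ and $\max\{n-1,\,n-(c_{1}\cdots c_{n})^{1/n}\}=\max\{n-1,n-1\}=n-1=a$. Thus the hypothesis $a\geq\max\{n-1,\,n-(c_{1}\cdots c_{n})^{1/n}\}$ of Theorem~\ref{th:positive} holds with equality, giving positivity of $\Theta^{(n,\tau_{k}^{n})}[n-1;1,\ldots,1]$. (Alternatively one can invoke Proposition~6.2 of~\cite{HouLi12}: here $c=1\leq\frac{n}{l_{\max}(\tau_{k}^{n})}=(n,k)$.) Moreover $a=n-1<n$, so by Theorem~\ref{th:cp2p} (applicable since $l_{\min}(\tau_{k}^{n})\geq 3\geq 2$) the map is \emph{not} completely positive. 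Hence it is positive but not completely positive. Equivalently, since $n>a=n-1\geq\max\{n-1,\,n-(c_{1}\cdots c_{n})^{1/n}\}$, this is exactly the ``particularly'' clause of Theorem~\ref{thm:atomic}.

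Finally I would apply Theorem~\ref{thm:atomic} directly: with $n\geq 3$, $\sigma=\tau_{k}^{n}$, $l_{\min}(\sigma)\geq 3$, and $\Theta^{(n,\tau_{k}^{n})}[n-1;1,\ldots,1]$ positive but not completely positive, Theorem~\ref{thm:atomic} concludes that $\Theta^{(n,\tau_{k}^{n})}[n-1;1,1,\ldots,1]$ is atomic. This completes the proof. I do not anticipate a serious obstacle: the whole argument is a bookkeeping exercise in checking that the parameters $a=n-1$, $c_{i}=1$ and the permutation $\tau_{k}^{n}$ satisfy the quantitative and combinatorial hypotheses of the earlier results. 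The one point that needs a moment's care is the exclusion of $k=\frac{n}{2}$: without it $\tau_{k}^{n}$ would be a product of transpositions, $l_{\min}=2$, and Lemma~\ref{2psumcop} (hence Theorem~\ref{thm:atomic}) would no longer apply — indeed in that borderline case the map becomes decomposable, as noted in the remark following Theorem~\ref{thm:atomic}. So the role of the hypothesis $k\neq\frac{n}{2}$ is simply to land in case~(\ref{le:lengthno}) of Lemma~\ref{le:length}.
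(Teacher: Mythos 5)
Your proposal is correct and follows essentially the same route as the paper: invoke Lemma~\ref{le:length} to get $l_{\min}(\tau_{k}^{n})\geq 3$, then apply the ``particularly'' clause of Theorem~\ref{thm:atomic} with $a=n-1$ and $c_{1}=\cdots=c_{n}=1$, for which $n>a=n-1=\max\{n-1,\,n-(c_{1}\cdots c_{n})^{1/n}\}$. The paper's proof is just a terser version of yours, leaving the positivity and non-complete-positivity checks implicit in that clause.
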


\begin{proof}
Suppose that $k\in\{1,2,\ldots,n-1\}$ ($n\geq 3$) and $k\neq \frac{n}{2}$ when $n$ is even. By Lemma \ref{le:length}, we have that $l_{\min}(\tau_{k}^{n})\geq 3$.
In Theorem \ref{thm:atomic}, if we let $a=n-1$ and $c_{1}=c_{1}=\cdots=c_{n}=1$, then we can see that $\Theta^{(n,\tau_{k}^{n})}[n-1;1,1,\ldots,1]$ is atomic.
\end{proof}

The following proposition extends Proposition 7.2 in \cite{HouLi12}.

\begin{proposition}\label{prop:decomposable}
Suppose that $\sigma\in S_{n}$ and $\sigma^{2}=id_{n}$. Let $F=\{i:\sigma(i)=i, \quad i=1,2,\ldots n \}$ and $F^{c}=\{1,2,\ldots,n\}\backslash F$. Let  $a,c_{1},c_{2},\ldots,c_{n}$ be positive real numbers. If $a\geq n-1$, $c_{i}\geq 1$ when $i\in F$ and $c_{i}c_{\sigma(i)}\geq 1$ when $i\in F^{c}$, then  $\Theta^{(n,\sigma)}[a;c_{1},c_{2},\ldots,c_{n}]$ is decomposable.
\end{proposition}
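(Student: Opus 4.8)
The plan is to work with the Choi matrix
\[
C:=C_{\Theta^{(n,\sigma)}[a;c_{1},\ldots,c_{n}]}=\sum_{i,j=1}^{n}E_{ij}\otimes\Theta^{(n,\sigma)}(E_{ij})
\]
and to produce an explicit decomposition $C=C_{1}+C_{2}$ with $C_{1}\ge 0$ and with the partial transpose $C_{2}^{\Gamma}=(1_{n}\otimes\boldsymbol{\mathrm{T}})(C_{2})\ge 0$; by the characterization of decomposability recalled just before Theorem \ref{thm:atomic}, this shows $\Theta^{(n,\sigma)}[a;c_{1},\ldots,c_{n}]$ is decomposable. First I would compute $C$. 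Since $\sigma^{2}=id_{n}$ we have $\sigma^{-1}=\sigma$, so by the formula for $\Theta^{(n,\sigma)}(E_{ij})$ used in the proof of Theorem \ref{thm:atomic} we get $\Theta^{(n,\sigma)}(E_{jj})=(a-1)E_{jj}+c_{\sigma(j)}E_{\sigma(j)\sigma(j)}$ and $\Theta^{(n,\sigma)}(E_{jk})=-E_{jk}$ for $j\ne k$, whence
\[
C=(a-1)\sum_{j=1}^{n}E_{jj}\otimes E_{jj}+\sum_{j=1}^{n}c_{\sigma(j)}\,E_{jj}\otimes E_{\sigma(j)\sigma(j)}-\sum_{j\ne k}E_{jk}\otimes E_{jk}.
\]
Setting $V=\mathrm{span}\{e_{j}\otimes e_{j}:1\le j\le n\}$, one reads off that $C=M\oplus D$ with respect to $\mathbb{C}^{n}\otimes\mathbb{C}^{n}=V\oplus V^{\perp}$, where $D\ge 0$ is diagonal (its only nonzero entries are $c_{\sigma(j)}$ at the coordinate vector $e_{j}\otimes e_{\sigma(j)}$, one for each $j\in F^{c}$) and $M$ is the $n\times n$ matrix on $V$ with $M_{jj}=a-1+c_{j}$ for $j\in F$, $M_{jj}=a-1$ for $j\in F^{c}$, and $M_{jk}=-1$ for $j\ne k$.

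The splitting I would take is
\[
C_{2}:=\sum_{j\in F^{c}}\bigl(c_{\sigma(j)}\,E_{jj}\otimes E_{\sigma(j)\sigma(j)}-E_{j\sigma(j)}\otimes E_{j\sigma(j)}\bigr),\qquad C_{1}:=C-C_{2}.
\]
The first family of summands in $C_{2}$ is precisely the part of $C$ supported off $V$, so $C_{1}$ is again supported on $V$, and a direct computation gives $C_{1}=L\oplus 0$, where $L$ has the same diagonal as $M$ while $L_{jk}=0$ if $k=\sigma(j)$ and $L_{jk}=-1$ otherwise (for $j\ne k$). To get $C_{1}\ge 0$ it suffices to show $L\ge 0$. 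Now $L$ is Hermitian, has nonnegative diagonal, and is weakly diagonally dominant: for a row $j\in F^{c}$ the off-diagonal absolute row sum is $n-2\le a-1=L_{jj}$ (using $a\ge n-1$), while for a row $j\in F$ it is $n-1\le a-1+c_{j}=L_{jj}$ (using $a\ge n-1$ and $c_{j}\ge 1$). Hence all eigenvalues of $L$ are nonnegative (Gershgorin), so $L\ge 0$ and $C_{1}\ge 0$.

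It remains to verify $C_{2}^{\Gamma}\ge 0$. Applying $1_{n}\otimes\boldsymbol{\mathrm{T}}$ and using that $E_{\sigma(j)\sigma(j)}$ is symmetric while $(E_{j\sigma(j)})^{t}=E_{\sigma(j)j}$, we get
\[
C_{2}^{\Gamma}=\sum_{j\in F^{c}}\bigl(c_{\sigma(j)}\,E_{jj}\otimes E_{\sigma(j)\sigma(j)}-E_{j\sigma(j)}\otimes E_{\sigma(j)j}\bigr).
\]
For each transposition $(p,q)$ occurring in the disjoint cycle decomposition of $\sigma$, the two summands with $j=p$ and $j=q$ add up to a matrix supported on $\mathrm{span}\{e_{p}\otimes e_{q},\,e_{q}\otimes e_{p}\}$, on which it equals $\left(\begin{smallmatrix}c_{q}&-1\\-1&c_{p}\end{smallmatrix}\right)$; this is positive because $c_{p}c_{q}=c_{p}c_{\sigma(p)}\ge 1$. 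Distinct transpositions of $\sigma$ yield blocks on pairwise orthogonal coordinate subspaces, and all remaining coordinates carry $0$, so $C_{2}^{\Gamma}\ge 0$. Therefore $C=C_{1}+C_{2}$ with $C_{1}\ge 0$ and $C_{2}^{\Gamma}\ge 0$, and $\Theta^{(n,\sigma)}[a;c_{1},\ldots,c_{n}]$ is decomposable.

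I do not expect a serious obstacle; the content is essentially index bookkeeping. The step that needs care is tracking where the partial transpose sends things: it converts the positive ``off-$V$'' diagonal entries of $D$ into exactly the $2\times 2$ blocks $\left(\begin{smallmatrix}c_{q}&-1\\-1&c_{p}\end{smallmatrix}\right)$ that absorb the $-1$'s produced by transposing the transposition part of the identity map. Relatedly, the diagonal-dominance count at the fixed points is where the hypothesis $c_{j}\ge 1$ ($j\in F$) is genuinely used, while $c_{j}c_{\sigma(j)}\ge 1$ ($j\in F^{c}$) is needed only for $C_{2}^{\Gamma}\ge 0$ and $a\ge n-1$ only for $C_{1}\ge 0$.
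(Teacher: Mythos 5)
Your proof is correct and is essentially the paper's own argument: your $C_{1}$ coincides with the paper's positive part $P$ (shown positive there by the same diagonal-dominance reasoning on an $n\times n$ compression), and your $C_{2}$ is exactly the paper's sum $\sum_{i\in F^{c},\,i<\sigma(i)}Q_{i}$, whose partial transpose is checked positive via the same $2\times 2$ blocks $\left(\begin{smallmatrix}c_{q}&-1\\-1&c_{p}\end{smallmatrix}\right)$ using $c_{p}c_{\sigma(p)}\ge 1$.
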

\begin{proof}

Let
$$P=\sum_{i\in F}(a+c_{i}-1)E_{ii}\otimes E_{ii}+(a-1)\sum_{i\in F^{c} }E_{ii}\otimes E_{ii}-\sum_{1\leq i\neq j\leq n\atop \sigma(i)\neq j }E_{ij}\otimes E_{ij}.$$
Note that $P$ is unitarily equivalent to $B\oplus 0$, where $B=(b_{ij})\in M_{n}$ is a Hermitian matrix satisfying: $b_{ii}=a-1$ or $a+c_{i}-1$; $b_{ij}=0$ or $-1$ (when $i\neq j$). Since $a\geq n-1$ and $c_{i}\geq 1$ when $i\in F$ , we can see that $B$ is a diagonally dominant Hermitian matrix. From the well-known strictly diagonal dominance theorem \cite{Horn}, it is not hard to see that $B$ is positive. Therefore, $P$ is positive.

Since $\sigma^{2}=id_{n}$, the lengths of cycles in the disjoint cycle decomposition of $\sigma$ are not greater than $2$. By definition we know that if $i\in F^{c}$, then $\sigma(i)\in F^{c}$, $i\neq\sigma(i)$ and $(i,\sigma(i))$ is a cycle of length $2$. So the number $k$ of elements in $F^{c}$ is even. Then $F^{c}$ consists of  $\frac{k}{2}$ pairs of elements and each pair is of the form ($i, \sigma(i)$).  For $i, \sigma(i)\in F^{c}$, without loss of generality we assume that $i<\sigma(i)$, and denote
$$Q_{i}=c_{\sigma(i)} E_{ii}\otimes E_{\sigma(i),\sigma(i)} +c_{i}E_{\sigma(i),\sigma(i)}\otimes E_{ii}
-E_{i,\sigma(i)}\otimes E_{i,\sigma(i)}-E_{\sigma(i) ,i}\otimes E_{\sigma(i), i} .$$
For $i\in F^{c}$, since $c_{i}c_{\sigma(i)}\geq 1$, it is not hard to see that the partial transpose $Q_{i}^{\Gamma}$ of $Q_{i}$ in $M_{n}\otimes M_{n}$ is positive.

For $i,j\in \{1,2,\ldots,n\}$, since $\sigma^{2}=id_{n}$, by definition we have \[\Theta^{(n,\sigma)}(E_{ii})=(a-1)E_{ii}+c_{\sigma^{-1}(i)}
E_{\sigma^{-1}(i),\sigma^{-1}(i)}=(a-1)E_{ii}+c_{\sigma(i)}
E_{\sigma(i),\sigma(i)}\]
and
\[\Theta^{(n,\sigma)}(E_{ij})=-E_{ij},\quad i\neq j.\]
It is not hard to see that the Choi matrix of $\Theta^{(n,\sigma)}$ is
\begin{align*}
C_{\Theta^{(n,\sigma)}}=&\sum_{i,j=1}^{n} E_{ij}\otimes\Theta^{(n,\sigma)}(E_{ij})\\
&=P+\sum_{i\in F^{c}\atop i<\sigma(i)}Q_{i}.
\end{align*}
So the Choi matrix of $\Theta^{(n,\sigma)}$ is the sums of positive matrices and matrices whose partial transposes are positive. From the correspondence between positive linear maps and Choi matrices discussed before, we see that $\Theta^{(n,\sigma)}$ is decomposable.
\end{proof}

\section{Separability of structural physical approximations and optimality of entanglement witnesses}\label{se:spaopt}

In this final section, as applications we give conditions to ensure the separability of the structural physical approximation of $\Theta^{(n,\sigma)}$ and the optimality of the entanglement witness associated with $\boldsymbol{\mathrm{T}}\circ\Theta^{(n,\sigma)}$.

Let $\phi$ be a nonzero positive linear map of $M_{n}$ into itself. Since $Tr(C_{\phi})=Tr(\phi(I_{n})) $, we see that $Tr(C_{\phi})>0$. Let $W=\frac{1}{Tr(C_{\phi})}C_{\phi}$. Since $W$ is a Hermitian matrix, there are $W^{+},W^{-}\geq 0$ such that $W^{+}W^{-}=0$ and $W=W^{+}-W^{-}$, and similarly for $C_{\phi}$ if we put $C_{\phi}^{+}=Tr(C_{\phi})W^{+}$ and $C_{\phi}^{-}=Tr(C_{\phi})W^{-}$. For $0\leq \lambda \leq 1$, let
$$\tilde{W}(\lambda)=\frac{1-\lambda}{n^{2}}I_{n}\otimes I_{n}+\lambda W .$$
By equation (14) of \cite{Chrus11},
$$\lambda^{*}=\frac{1}{1+n^{2}\|W^{-}\|}$$
is the maximal $\lambda$ such that $\tilde{W}(\lambda)\geq 0$.

In \cite{Stormer}, Stormer gave a formula of structural physical approximation for unital linear maps of $M_{n}$ into itself.
Generally, let $\phi$ be any nonzero positive linear map of $M_{n}$ into itself. The structural physical approximation of $\phi$ (denoted by $SPA(\phi)$) is defined as
\begin{align}
SPA(\phi)&=\tilde{W}(\lambda^{*})=\frac{1}{n^{2}}\left(1-\frac{1}{1+n^{2}\|W^{-}\|} \right)I_{n}\otimes I_{n}+\frac{1}{1+n^{2}\|W^{-}\|}W\nonumber\\
&=\frac{\|W^{-}\|}{1+n^{2}\|W^{-}\|}I_{n}\otimes I_{n}+\frac{1}{1+n^{2}\|W^{-}\|}W\nonumber\\
&=\frac{Tr(C_{\phi})^{-1}\|C_{\phi}^{-}\|}{1+ n^{2}Tr(C_{\phi})^{-1}\|C_{\phi}^{-}\|} I_{n}\otimes I_{n}+\frac{Tr(C_{\phi})^{-1}}{1+ n^{2}Tr(C_{\phi})^{-1}\|C_{\phi}^{-}\|}C_{\phi}\label{de:SPA}\\
&=\frac{1}{Tr(C_{\phi})+ n^{2}\|C_{\phi}^{-}\|}\left(\|C_{\phi}^{-}\| I_{n}\otimes I_{n}+C_{\phi} \right).\nonumber
\end{align}

Recall that a positive matrix $ A\in M_{m}\otimes M_{n}$ is said to be {\it separable} if $A=\sum_{i=1}^{k}B_{i}\otimes C_{i}$ for some $k\in \mathbb{N}$, and positive matrices $B_{i}\in M_{m}$ and $C_{i}\in M_{n}$ for $i=1,2,\ldots,k$. In the following proposition, if we let $c_{1}=c_{2}=\cdots=c_{n}=1$ and $\sigma=\tau_{k}^{n}$ ($k=1,2,\ldots,n-1$ and $k\neq \frac{n}{2}$ when $n$ is even), we obtain Proposition 4.2 in \cite{Hou12}.

\begin{proposition} Suppose that $\sigma\in S_{n}$ and $l_{\min}(\sigma)\geq 2$. For positive real numbers $a,c_{1},c_{2},\ldots,c_{n}$, if $a=n-1$ and  $\Theta^{(n,\sigma)}[a;c_{1},c_{2},\ldots,c_{n}]$ is positive, then the structural physical approximation of $\Theta^{(n,\sigma)}[a;c_{1},c_{2},\ldots,c_{n}]$ is separable.
\end{proposition}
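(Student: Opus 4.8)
The plan is to compute $SPA(\Theta^{(n,\sigma)})$ from (\ref{de:SPA}), reduce the question to the separability of a single matrix $M$, and then decompose $M$ explicitly into positive matrices of tensor-product form. First I would record that, since $l_{\min}(\sigma)\geq 2$, a direct computation (see the proof of Theorem~\ref{thm:atomic}) gives the Choi matrix
\begin{align*}
C_{\Theta^{(n,\sigma)}}=\sum_{i=1}^{n}E_{ii}\otimes\bigl[(a-1)E_{ii}+c_{\sigma^{-1}(i)}E_{\sigma^{-1}(i),\sigma^{-1}(i)}\bigr]-\sum_{1\leq i\neq j\leq n}E_{ij}\otimes E_{ij}.
\end{align*}
By the characteristic polynomial of $C_{\Theta^{(n,\sigma)}}$ computed in the proof of Theorem~\ref{th:cp2p}, the eigenvalues of $C_{\Theta^{(n,\sigma)}}$ are $0$, $a$, $a-n$ and $c_{1},\dots,c_{n}$; since $a=n-1$ the only negative eigenvalue is $a-n=-1$, so $C_{\Theta^{(n,\sigma)}}^{-}$ has rank one and $\|C_{\Theta^{(n,\sigma)}}^{-}\|=1$. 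Inserting this into (\ref{de:SPA}) exhibits $SPA(\Theta^{(n,\sigma)})$ as a positive scalar multiple of $M:=I_{n}\otimes I_{n}+C_{\Theta^{(n,\sigma)}}$. Since separability is preserved by multiplication by a positive scalar, it suffices to prove that $M$ is separable.

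The decisive step is to build, for each pair $1\leq i<j\leq n$, a separable matrix that carries the off-diagonal entries $-E_{ij}\otimes E_{ij}-E_{ji}\otimes E_{ji}$ of $M$ while contributing only the amount of diagonal that $M$ provides. I would take
\begin{align*}
N_{ij}:=(E_{ii}+E_{jj})\otimes(E_{ii}+E_{jj})-E_{ij}\otimes E_{ij}-E_{ji}\otimes E_{ji}
\end{align*}
and verify, by a direct matrix computation, that with the unit vectors $u_{ij}^{\pm}=\tfrac{1}{\sqrt{2}}(e_{i}\pm e_{j})$ and $w_{ij}^{\pm}=\tfrac{1}{\sqrt{2}}(e_{i}\pm i e_{j})$ in $\mathbb{C}^{n}$,
\begin{align*}
N_{ij}={}&u_{ij}^{+}(u_{ij}^{+})^{*}\otimes u_{ij}^{-}(u_{ij}^{-})^{*}+u_{ij}^{-}(u_{ij}^{-})^{*}\otimes u_{ij}^{+}(u_{ij}^{+})^{*}\\
&{}+w_{ij}^{+}(w_{ij}^{+})^{*}\otimes w_{ij}^{+}(w_{ij}^{+})^{*}+w_{ij}^{-}(w_{ij}^{-})^{*}\otimes w_{ij}^{-}(w_{ij}^{-})^{*}.
\end{align*}
Each summand is a tensor product of positive rank-one matrices, so $N_{ij}$ is separable. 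Finding this decomposition is the part that needs a little thought: restricted to the coordinates $\{i,j\}$, $N_{ij}$ is the two-qubit matrix $I\otimes I-\tfrac{1}{2}X\otimes X+\tfrac{1}{2}Y\otimes Y$ with $X,Y$ the Pauli matrices, and the identity above is obtained by rewriting the $X\otimes X$ and $Y\otimes Y$ parts through the rank-one eigenprojections of $X$ and of $Y$ on each factor, the spurious $\pm\tfrac{1}{2}I\otimes I$ terms cancelling.

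Finally I would verify the bookkeeping identity
\begin{align*}
M=\sum_{1\leq i<j\leq n}N_{ij}+\sum_{i=1}^{n}c_{\sigma^{-1}(i)}\,E_{ii}\otimes E_{\sigma^{-1}(i),\sigma^{-1}(i)}.
\end{align*}
Substituting $a-1=n-2$ in $C_{\Theta^{(n,\sigma)}}$ and using $\sum_{1\leq i<j\leq n}(E_{ii}+E_{jj})\otimes(E_{ii}+E_{jj})=(n-2)\sum_{i}E_{ii}\otimes E_{ii}+I_{n}\otimes I_{n}$ together with $\sum_{1\leq i<j\leq n}(E_{ij}\otimes E_{ij}+E_{ji}\otimes E_{ji})=\sum_{1\leq i\neq j\leq n}E_{ij}\otimes E_{ij}$, the difference $M-\sum_{i<j}N_{ij}$ collapses to $(a-n+1)\sum_{i}E_{ii}\otimes E_{ii}+\sum_{i}c_{\sigma^{-1}(i)}E_{ii}\otimes E_{\sigma^{-1}(i),\sigma^{-1}(i)}$; here the hypothesis $a=n-1$ is precisely what kills the first term. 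The remaining sum $\sum_{i}c_{\sigma^{-1}(i)}E_{ii}\otimes E_{\sigma^{-1}(i),\sigma^{-1}(i)}$ is a sum of nonnegative scalars times tensor products of positive matrices, hence separable, and each $N_{ij}$ is separable by the previous step; therefore $M$, and with it $SPA(\Theta^{(n,\sigma)})$, is separable. I expect the one genuine obstacle to be the choice of the blocks $N_{ij}$: a coarser separable block carrying the same off-diagonal pair (for instance with its diagonal part doubled) overshoots the diagonal of $M$, and the surplus cannot be removed since one may not subtract separable matrices, so the diagonal of $M$ must be matched exactly.
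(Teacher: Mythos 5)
Your proof is correct and follows essentially the same route as the paper: both reduce, via $\|C_{\Theta^{(n,\sigma)}}^{-}\|=1$, to the separability of $I_{n}\otimes I_{n}+C_{\Theta^{(n,\sigma)}}$, and your blocks $N_{ij}=(E_{ii}+E_{jj})\otimes(E_{ii}+E_{jj})-E_{ij}\otimes E_{ij}-E_{ji}\otimes E_{ji}$ are exactly the paper's $\sigma_{ij}$, with the hypothesis $a=n-1$ used in the same way to make the diagonals match. The one genuine difference is how separability of each block is certified: the paper views $\sigma_{ij}$ as $(D_{ij}\otimes D_{ij})R(D_{ij}^{*}\otimes D_{ij}^{*})$ for a PPT matrix $R\in M_{2}\otimes M_{2}$ and invokes the Horodecki theorem that PPT implies separability in $2\times 2$ systems, whereas you give an explicit convex decomposition of $N_{ij}$ into tensor products of rank-one projections built from $\tfrac{1}{\sqrt{2}}(e_{i}\pm e_{j})$ and $\tfrac{1}{\sqrt{2}}(e_{i}\pm i e_{j})$ (which I have checked is an identity). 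Your version is more self-contained and constructive; the paper's is shorter at the cost of citing the two-qubit separability criterion.
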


\begin{proof}
For $\sigma\in S_{n}$, if $l_{\min}(\sigma)\geq 2$ and $a=n-1$, it is not hard to see that $C_{\Theta^{(n,\sigma)}}$ is unitarily equivalent to $G\oplus H$, where
$$G=\left(
      \begin{array}{cccc}
        n-2 & -1 & \cdots & -1 \\
        -1 & n-2 & \cdots & -1 \\
      \vdots& \vdots & \ddots & \vdots \\
       -1 & -1 & -1 & n-2 \\
      \end{array}
    \right) \in M_{n}
$$
and $H\in M_{n^{2}-n}$ is a diagonal matrix whose diagonal consists of $c_{i}$ ($i=1,2,\ldots,n$) and $0$. Since $G$ has only one negative eigenvalue: $-1$, so is $C_{\Theta^{(n,\sigma)}}$. Thus we have $\|C_{\Theta^{(n,\sigma)}}^{-}\|=1$. Since $Tr(C_{\Theta^{(n,\sigma)}})=n(n-2)+\sum_{i=1}^{n}c_{i}$, by (\ref{de:SPA}) we have
\begin{align}
SPA(C_{\Theta^{(n,\sigma)}})=&\frac{1}{Tr(C_{\Theta^{(n,\sigma)}})+ n^{2}\|C_{\Theta^{(n,\sigma)}}^{-}\|}\left(\|C_{\Theta^{(n,\sigma)}}^{-}\| I_{n}\otimes I_{n}+C_{\Theta^{(n,\sigma)}} \right)\nonumber\\
=&\frac{1}{Tr(C_{\Theta^{(n,\sigma)}})+ n^{2}\|C_{\Theta^{(n,\sigma)}}^{-}\|}\left( I_{n}\otimes I_{n}+C_{\Theta^{(n,\sigma)}} \right)\nonumber\\
=&\frac{1}{n(n-2)+\sum_{i=1}^{n}c_{i}+n^{2}}\Bigg(\sum_{i,j=1}^{n}E_{ii}\otimes E_{jj}+(n-2)\sum_{i=1}^{n}E_{ii}\otimes E_{ii}\label{SPAoftheta}\\
&+\sum_{i=1}^{n} c_{\sigma^{-1}(i)} E_{ii}\otimes E_{\sigma^{-1}(i),\sigma^{-1}(i)}-\sum_{1\leq i\neq j\leq n}E_{ij}\otimes E_{ij} \Bigg)\nonumber.
\end{align}

It is not hard to see that
\begin{align*}
&\sum_{i,j=1}^{n}E_{ii}\otimes E_{jj}+(n-2)\sum_{i=1}^{n}E_{ii}\otimes E_{ii}-\sum_{1\leq i\neq j\leq n}E_{ij}\otimes E_{ij}\\
=&\sum_{1\leq i<j\leq n}\left(E_{ii}\otimes E_{ii}+E_{jj}\otimes E_{jj}+E_{ii}\otimes E_{jj}+E_{jj}\otimes E_{ii}-E_{ij}\otimes E_{ij}-E_{ji}\otimes E_{ji}\right).
\end{align*}
Let $\sigma_{ij}=E_{ii}\otimes E_{ii}+E_{jj}\otimes E_{jj}+E_{ii}\otimes E_{jj}+E_{jj}\otimes E_{ii}-E_{ij}\otimes E_{ij}-E_{ji}\otimes E_{ji}$. To illustrate the separability of $\sigma_{ij}\in M_{n}\otimes M_{n}$, in this paragraph we let $\{e_{i}^{(2)}:i=1,2,\ldots,n\}$ and $\{e_{i}^{(n)}:i=1,2,\ldots,n\}$ denote the canonical orthonormal basis of $\mathbb{C}^{2}$ and $\mathbb{C}^{n}$, respectively. Let $\{E_{ij}^{(2)}:i,j=1,2\}$ denote the canonical matrix units of $M_2$.
Let
\begin{align*}
R
=&E_{11}^{(2)}\otimes E_{11}^{(2)}+E_{11}^{(2)}\otimes E_{22}^{(2)}+E_{22}^{(2)}\otimes E_{11}^{(2)}\\
&+E_{22}^{(2)}\otimes E_{22}^{(2)}-E_{12}^{(2)}\otimes E_{12}^{(2)}-E_{21}^{(2)}\otimes E_{21}^{(2)}\\
&=\left(
      \begin{array}{cccc}
        1 & 0 & 0 & -1 \\
        0 & 1 & 0 & 0 \\
        0 & 0 & 1 & 0 \\
        -1 & 0 & 0 & 1 \\
      \end{array}
    \right)\in M_{2}\otimes M_{2}.
\end{align*}
Let $R^{\Gamma}$ be the partial transpose of $R$ in $M_{2}\otimes M_{2}$. It is not hard to see that $R$ and $R^{\Gamma}$ are positive. From Theorem 2 of \cite{Horodecki1996} we can see that a positive matrix in $M_{2}\otimes M_{2}$ is separable if and only if its partial transpose is positive, hence $R$ is separable.
Let
$$D_{ij}=(e_{i}^{(n)}e_{1}^{(2)*}+e_{j}^{(n)}e_{2}^{(2)*})\in M_{n\times 2},$$
where $e_{1}^{(2)*}$ and $e_{2}^{(2)*}$ denote the conjugate transpose of $e_{1}^{(2)}$ and $e_{2}^{(2)}$, respectively. Note that $\sigma_{ij}=\left(D_{ij}\otimes D_{ij}\right) R \left(D_{ij}^{*}\otimes D_{ij}^{*}\right)$. Since $R$ is separable, we have that $\sigma_{ij}$ is separable.

From (\ref{SPAoftheta}) we have
$$SPA(C_{\Theta^{(n,\sigma)}})
=\frac{1}{n(n-2)+\sum_{i=1}^{n}c_{i}+n^{2}}\left(\sum_{1\leq i<j\leq n} \sigma_{ij}
+\sum_{i=1}^{n} c_{\sigma^{-1}(i)} E_{ii}\otimes E_{\sigma^{-1}(i),\sigma^{-1}(i)}\right).$$
Hence $SPA(C_{\Theta^{(n,\sigma)}})$ is separable.
\end{proof}

Let $\phi:M_{n}\mapsto M_{n}$ be a positive linear map. If $\phi$ is not completely positive, then
$$W_{\phi}=\frac{1}{n}C_{\phi}$$
is called the {\it entanglement witness} associated to $\phi$. An entanglement witness is said to be {\it optimal} if it detects a maximal set of entanglement \cite{Horodecki2000}. It was shown in \cite{Horodecki2000} that if $W_{\phi}$ has the {\it spanning property}, that is, $\mathcal{P}_{W_{\phi}}=
\{\zeta:\langle W_{\phi}\zeta, \zeta\rangle=0,\;\text{where}\; \zeta=\xi\otimes\eta\in\mathbb{C}^{n}\otimes\mathbb{C}^{n}
\}$ spans the whole space $\mathbb{C}^{n}\otimes\mathbb{C}^{n}$, then $W_{\phi}$ is an optimal entanglement witness.

For $\Theta^{(n,\sigma)}[n-c;c,c,\ldots,c]$ which was discussed in Proposition 6.2 of \cite{HouLi12}, let $c=1$ and $\sigma=\tau_{k}^{n}$ ($k=1,2,\ldots,n$). It was shown in \cite{Ha12} that if $k\neq n$ and $\frac{n}{2}$ (when $n$ ($n\geq 3$) is even), then the entanglement witness associated to $\boldsymbol{\mathrm{T}}\circ\Theta^{(n,\tau_{k}^{n})}[n-1;1,1,\ldots,1]$ is optimal. Using the method in \cite{Ha12}, in the following we give conditions when the entanglement witness associated to $\boldsymbol{\mathrm{T}}\circ\Theta^{(n,\sigma)}[n-c;c,c,\ldots,c]$ is optimal.

\begin{theorem}\label{th:optimal}
Suppose that $\sigma\in S_{n}$ ($n\geq 3$) and $l_{\min}(\sigma)\geq 3$. If
$0< c \leq \frac{n}{l_{\max}(\sigma)}$, then the entanglement witness associated to $\boldsymbol{\mathrm{T}}\circ\Theta^{(n,\sigma)}[n-c;c,c,\ldots,c]$ is optimal.
\end{theorem}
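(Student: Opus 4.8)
The plan is to show that the entanglement witness $W_{\phi}=\tfrac1n C_{\phi}$ associated to $\phi=\boldsymbol{\mathrm{T}}\circ\Theta^{(n,\sigma)}[n-c;c,\ldots,c]$ has the spanning property, whence optimality follows by \cite{Horodecki2000}. First I would observe that $\phi$ is a genuine witness: it is positive because $\Theta^{(n,\sigma)}[n-c;c,\ldots,c]$ is positive (Proposition~6.2 of \cite{HouLi12}, using $0<c\le\tfrac{n}{l_{\max}(\sigma)}$) and $\boldsymbol{\mathrm{T}}$ is positive, but it is not completely positive because $C_{\phi}$ contains the summand $-\sum_{i\ne j}E_{ij}\otimes E_{ji}$, a piece of the flip operator, which forces a negative eigenvalue. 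Then I would record the identity
\begin{align*}
\langle C_{\phi}(\xi\otimes\eta),\xi\otimes\eta\rangle=\big\langle\Theta^{(n,\sigma)}[n-c;c,\ldots,c]\big(\overline{\xi}\,\overline{\xi}^{*}\big)\overline{\eta},\overline{\eta}\big\rangle
\end{align*}
for product vectors $\xi\otimes\eta$, which comes from $C_{\phi}=\sum_{i,j}E_{ij}\otimes\phi(E_{ij})$, $\langle E_{ij}\xi,\xi\rangle=\overline{\xi_{i}}\xi_{j}$, and $\langle A^{t}\eta,\eta\rangle=\langle A\overline{\eta},\overline{\eta}\rangle$. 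Since $\Theta^{(n,\sigma)}[n-c;c,\ldots,c]$ is positive, $\xi\otimes\eta\in\mathcal{P}_{W_{\phi}}$ exactly when $\overline{\eta}\in\ker\big(\Theta^{(n,\sigma)}[n-c;c,\ldots,c](\overline{\xi}\,\overline{\xi}^{*})\big)=\ker\big(D_{\xi}-\overline{\xi}\,\overline{\xi}^{*}\big)$, where $D_{\xi}=\operatorname{diag}\big((n-c)|\xi_{i}|^{2}+c|\xi_{\sigma(i)}|^{2}\big)_{i=1}^{n}$.

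Next I would build two families of such product vectors. Taking $\xi=e_{m}$ gives $\Theta^{(n,\sigma)}[n-c;c,\ldots,c](E_{mm})=(n-c-1)E_{mm}+cE_{\sigma^{-1}(m),\sigma^{-1}(m)}$; since $c\le\tfrac{n}{l_{\max}(\sigma)}\le\tfrac n3<n-1$ for $n\ge 3$, the coefficient $n-c-1$ is positive, so its kernel is $\operatorname{span}\{e_{l}:l\ne m,\ \sigma(l)\ne m\}$, which yields $e_{m}\otimes e_{l}\in\mathcal{P}_{W_{\phi}}$ for all $m\ne l$ with $\sigma(l)\ne m$. Taking $\xi_{z}=(z^{3^{1}},z^{3^{2}},\ldots,z^{3^{n}})$ with $|z|=1$, every coordinate has modulus one, so $D_{\xi_{z}}=nI_{n}$ and $\Theta^{(n,\sigma)}[n-c;c,\ldots,c](\overline{\xi_{z}}\,\overline{\xi_{z}}^{*})=nI_{n}-\overline{\xi_{z}}\,\overline{\xi_{z}}^{*}$ annihilates $\overline{\xi_{z}}$ (as $\|\xi_{z}\|^{2}=n$); hence $\xi_{z}\otimes\xi_{z}\in\mathcal{P}_{W_{\phi}}$ for every $z$ on the unit circle.

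It remains to check spanning. The first family spans the coordinate subspace $W_{1}=\operatorname{span}\{e_{m}\otimes e_{l}:m\ne l,\ \sigma(l)\ne m\}$, whose complementary coordinate subspace is $W_{2}=\operatorname{span}\big(\{e_{m}\otimes e_{m}\}_{m}\cup\{e_{\sigma(l)}\otimes e_{l}\}_{l}\big)$, of dimension $2n$. Expanding $\xi_{z}\otimes\xi_{z}=\sum_{m,l}z^{3^{m}+3^{l}}e_{m}\otimes e_{l}$ and discarding the part in $W_{1}$ gives
\begin{align*}
\xi_{z}\otimes\xi_{z}\equiv\sum_{m=1}^{n}z^{2\cdot 3^{m}}e_{m}\otimes e_{m}+\sum_{l=1}^{n}z^{3^{\sigma(l)}+3^{l}}e_{\sigma(l)}\otimes e_{l}\pmod{W_{1}}.
\end{align*}
The key point is that the $2n$ exponents $\{2\cdot 3^{m}\}_{m}\cup\{3^{\sigma(l)}+3^{l}\}_{l}$ are pairwise distinct, and this is precisely where $l_{\min}(\sigma)\ge 3$ enters: in base~$3$ the number $2\cdot 3^{m}$ has one digit equal to $2$, while $3^{\sigma(l)}+3^{l}$ has two digits equal to $1$ (because $\sigma$ has no fixed point), so these two types never coincide; and $3^{\sigma(l)}+3^{l}=3^{\sigma(l')}+3^{l'}$ forces $\{l,\sigma(l)\}=\{l',\sigma(l')\}$, hence $l=l'$ unless $\sigma^{2}(l)=l$, which is excluded since $\sigma$ has no $2$-cycle. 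Therefore a linear functional annihilating the first family and all $\xi_{z}\otimes\xi_{z}$ ($|z|=1$) vanishes on $W_{1}$, and then, restricted to the displayed $W_{2}$-component, is a trigonometric polynomial with $2n$ distinct frequencies vanishing identically on the circle, hence is zero on $W_{2}$ too; so it is the zero functional, and the two families span $W_{1}+W_{2}=\mathbb{C}^{n}\otimes\mathbb{C}^{n}$, i.e.\ $W_{\phi}$ has the spanning property. The main obstacle is exactly this last step --- arranging the test vectors $\xi_{z}$ so that the ``diagonal'' frequencies $2\cdot 3^{m}$ are separated from the ``$\sigma$-shifted'' ones $3^{\sigma(l)}+3^{l}$ --- which is what the hypothesis $l_{\min}(\sigma)\ge 3$ is there to provide; the remainder is bookkeeping with the Choi matrix together with the already-established positivity.
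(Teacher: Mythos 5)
Your proposal is correct and follows the same overall strategy as the paper: exhibit the spanning property of $W_{\boldsymbol{\mathrm{T}}\circ\Theta^{(n,\sigma)}}$ using exactly the same two families of product vectors, namely the coordinate vectors $e_{m}\otimes e_{l}$ with $l\neq m$, $\sigma(l)\neq m$ (the paper's $\mathcal{V}_{i}^{(n,\sigma)}$) and the phase vectors $\xi\otimes\xi$ with unimodular entries (the paper's $\mathcal{S}$), and your kernel characterization $\overline{\eta}\in\ker\Theta^{(n,\sigma)}(\overline{\xi}\,\overline{\xi}^{*})$ is just a cleaner packaging of the paper's direct computation of $\langle W\zeta,\zeta\rangle$. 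The one place you genuinely diverge is the spanning verification: the paper cites Ha's result that the full family $\mathcal{S}$ spans the symmetric matrices $E_{ii}$ and $E_{ij}+E_{ji}$, and then argues that for each $i\neq j$ at least one of $E_{ij}$, $E_{ji}$ lies in the span of the coordinate family (using $l_{\min}(\sigma)\geq 3$ to rule out $\sigma(i)=j$ and $\sigma(j)=i$ simultaneously), whereas you restrict to the one-parameter subfamily $\xi_{z}=(z^{3^{1}},\ldots,z^{3^{n}})$ and show directly, via uniqueness of base-$3$ expansions, that the $2n$ residual frequencies $2\cdot 3^{m}$ and $3^{\sigma(l)}+3^{l}$ are pairwise distinct. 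Both arguments invoke $l_{\min}(\sigma)\geq 3$ at the same point (to exclude fixed points and $2$-cycles of $\sigma$); yours has the advantage of being self-contained rather than deferring to \cite{Ha12}, at the cost of a slightly more ad hoc choice of test vectors. All the supporting details check out: $n-c-1>0$ follows from $c\leq n/l_{\max}(\sigma)\leq n/3<n-1$, the identity $\langle C_{\phi}(\xi\otimes\eta),\xi\otimes\eta\rangle=\langle\Theta^{(n,\sigma)}(\overline{\xi}\,\overline{\xi}^{*})\overline{\eta},\overline{\eta}\rangle$ is valid, and the decomposition $\mathbb{C}^{n}\otimes\mathbb{C}^{n}=W_{1}\oplus W_{2}$ into coordinate subspaces is exactly right since $\sigma$ has no fixed points.
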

\begin{proof}
Suppose that $\sigma\in S_{n}$ and $l_{\min}(\sigma)\geq 3$. Since $0< c \leq \frac{n}{l_{\max}(\sigma)}$, by Corollary \ref{cor:catomic} we know that $\Theta^{(n,\sigma)}[n-c;c,c,\ldots,c]$ is positive. Since $\boldsymbol{\mathrm{T}}$ is positive, $\boldsymbol{\mathrm{T}}\circ\Theta^{(n,\sigma)}[n-c;c,c,\ldots,c]$ is also positive.

For $i,j\in \{1,2,\ldots,n\}$, since $\Theta^{(n,\sigma)}(E_{ii})=(n-c-1)E_{ii}+cE_{\sigma^{-1}(i),
\sigma^{-1}(i)}$ and $\Theta^{(n,\sigma)}(E_{ij})=-E_{ij}$ ($i\neq j$), we have
\begin{align*}
W_{\boldsymbol{\mathrm{T}}\circ\Theta^{(n,\sigma)}}
&=\frac{1}{n} C_{\boldsymbol{\mathrm{T}}\circ\Theta^{(n,\sigma)}}=\frac{1}{n} \left( \sum_{i,j=1}^{n} E_{ij}\otimes \boldsymbol{\mathrm{T}}\circ\Theta^{(n,\sigma)}(E_{ij})\right)\\
&=\frac{1}{n} \left( \sum_{i,j=1}^{n} E_{ij}\otimes W_{ji}^{(n,\sigma)}\right),
\end{align*}
where
\begin{equation*}
W_{ij}^{(n,\sigma)}=\left\{\begin{aligned}
&(n-c-1)E_{ii}+cE_{\sigma^{-1}(i),\sigma^{-1}(i)}\quad &\text{if}\ i=j\\
&-E_{ij}                                \quad &\text{if}\; i\neq j
                           \end{aligned} \right..
\end{equation*}

For any $n$-tuple $\theta=(\theta_{1},\theta_{2},\cdots,\theta_{n})$ of real numbers $\theta_{j}$, let
\begin{align}\label{eq: optimalS}
\mathcal{S}=\{\xi_{\theta}\otimes\xi_{\theta}:\xi_{\theta}=\sum_{j=1}^{n} e^{i\theta_{j}} e_{j}\}.
\end{align}
For each $i\in\{1,2,\ldots,n\}$, let
\begin{align}\label{eq:optimalV}
\mathcal{V}_{i}^{(n,\sigma)}=\{e_{i}\otimes e_{j}: j\neq i \;\text{and}\; j\neq \sigma^{-1}(i), \; \text{where}\; 1\leq j\leq n\}.
\end{align}

For $\eta_{\theta}=\xi_{\theta}\otimes\xi_{\theta}\in \mathcal{S}$ where $\xi_{\theta}=\sum_{j=1}^{n} e^{i\theta_{j}} e_{j}$, we have
\begin{align*}
\langle\eta_{\theta} W_{\boldsymbol{\mathrm{T}}\circ\Theta^{(n,\sigma)}},
\eta_{\theta}\rangle=&\frac{1}{n}\Bigg(\sum_{i=1}^{n}
(\xi_{\theta}\otimes\xi_{\theta})^{*}
\Big(E_{ii}\otimes((n-c-1)E_{ii}+cE_{\sigma^{-1}(i),\sigma^{-1}(i)})
\Big)(\xi_{\theta}\otimes\xi_{\theta})\\
&-\sum_{1\leq i\neq j \leq n}
(\xi_{\theta}\otimes\xi_{\theta})^{*} E_{ij}\otimes E_{ji}(\xi_{\theta}\otimes\xi_{\theta})\Bigg)\\
=&\frac{1}{n}\Bigg((n-c-1)\sum_{j=1}^{n}|e^{i\theta_{j}}|^{2}|e^{i\theta_{j}}|^{2}+
c\sum_{j=1}^{n}|e^{i\theta_{j}}|^{2}|e^{i\theta_{\sigma^{-1}(j)}}|^{2}
-\sum_{1\leq j\neq l\leq n} |e^{i\theta_{j}}|^{2}|e^{i\theta_{l}}|^{2}\Bigg)\\
=&\frac{1}{n}\left((n-c-1)n+nc-(n^{2}-n)\right)\\
=&0.
\end{align*}
Suppose that $l,m\in \{1,2,\ldots,n\}$ and
$f_{lm}=e_{l}\otimes e_{m}\in \mathcal{V}_{l}^{(n,\sigma)}$. Since $m\neq l$ and $m\neq \sigma^{-1}(l)$, we have
\begin{align*}
\langle f_{lm}W_{\boldsymbol{\mathrm{T}}\circ\Theta^{(n,\sigma)}},
f_{lm}\rangle=&\frac{1}{n}\Bigg(\sum_{i=1}^{n}(e_{l}\otimes e_{m})^{*}
\Big(E_{ii}\otimes((n-c-1)E_{ii}+cE_{\sigma^{-1}(i),\sigma^{-1}(i)})
\Big)(e_{l}\otimes e_{m})\\
&-\sum_{1\leq i\neq j \leq n}
(e_{l}\otimes e_{m})^{*} E_{ij}\otimes E_{ji}(e_{l}\otimes e_{m})\Bigg)\\
=&0.
\end{align*}

Now we will show that if $l_{\min}(\sigma)\geq 3$, the vectors defined in (\ref{eq: optimalS}) and (\ref{eq:optimalV}) span the whole space $\mathbb{C}^{n}\otimes \mathbb{C}^{n}$. For each vector $\sum_{i,j=1}^{n}x_{i}y_{j}e_{i}\otimes e_{j}\in \mathbb{C}^{n}\otimes \mathbb{C}^{n}$, it can be identified with a matrix $\sum_{i,j=1}^{n}x_{i}y_{j} E_{ij}\in M_{n}$. So we identify $\mathbb{C}^{n}\otimes \mathbb{C}^{n}$ with $M_n$. In \cite{Ha12}, Ha showed that the vectors $\xi_{\theta}\otimes\xi_{\theta}$ in (\ref{eq: optimalS})
span all symmetric matrices $E_{ii}$ and $E_{ij}+ E_{ji}$ ($1\leq i\neq j\leq n$) in $M_n$ under the identification between $\mathbb{C}^{n}\otimes \mathbb{C}^{n}$ and $M_n$.

For $1\leq i, j\leq n$, if $i\neq j$, we have either $e_{i}\otimes e_{j}\in \mathcal{V}_{i}^{(n,\sigma)}$ or $e_{j}\otimes e_{i}\in \mathcal{V}_{j}^{(n,\sigma)}$. If not, by the definition of $\{\mathcal{V}_{i}^{(n,\sigma)}\}_{i=1}^{n}$, we have that $i=\sigma^{-1}(j)$ and $j=\sigma^{-1}(i)$, that is, $\sigma(i)=j$ and $\sigma(j)=i$. So there is a cycle of length $2$ in the disjoint cycle decomposition of $\sigma$ which contradicts the assumption $l_{\min}(\sigma)\geq 3$. Thus under the identification between $\mathbb{C}^{n}\otimes \mathbb{C}^{n}$ and $M_n$, either $E_{ij}$ or $E_{ji}$ ($1\leq i\neq j\leq n$) lies in the linear span of vectors in (\ref{eq:optimalV}).

From discussion above, we can see that the vectors in (\ref{eq: optimalS}) and (\ref{eq:optimalV}) span the whole space $\mathbb{C}^{n}\otimes \mathbb{C}^{n}$. Hence $W_{\boldsymbol{\mathrm{T}}\circ\Theta^{(n,\sigma)}}$ has the spanning property, and so it is optimal.
\end{proof}

Suppose that $k\in \{1,2,\ldots,n-1\}$, $n\geq 3$ and $k\neq \frac{n}{2}$ when $n$ is even. From Lemma \ref{le:length}, we have $l_{\min}(\tau_{k}^{n})\geq 3$. In Theorem \ref{th:optimal}, if we let $\sigma=\tau_{k}^{n}$ and $c=1$, then we obtain Theorem 1 in \cite{Ha12}. In Theorem \ref{th:optimal}, if we let $c=0$, then $\Theta^{(n,\sigma)}[n-c;c,c,\ldots,c]=\Theta^{(n,\sigma)}[n;0,0,\ldots,0]
=\Delta_{(n,n,\ldots,n)}$, where $\Delta_{(n,n,\ldots,n)}$ is defined in (\ref{eq:delta}). For $\Delta_{(n,n,\ldots,n)}$, we have the following proposition.

\begin{proposition}
Suppose that $n\geq2$. Then $\boldsymbol{\mathrm{T}}\circ\Delta_{(n,n,\ldots,n)}$ is decomposable and the entanglement witness associated to $\boldsymbol{\mathrm{T}}\circ\Delta_{(n,n,\ldots,n)}$ is optimal.
\end{proposition}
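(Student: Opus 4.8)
The plan is to treat decomposability and optimality separately, after recording an explicit form of the map. As noted in the discussion following Theorem~\ref{th:optimal}, $\Delta_{(n,n,\ldots,n)}(X)=n\,\mathrm{diag}(x_{11},\ldots,x_{nn})-X$, so that $\Delta_{(n,n,\ldots,n)}=\Theta^{(n,id_{n})}[n-1;1,1,\ldots,1]$. By Proposition~\ref{prop: posidn} (or Corollary~\ref{cor:delta} with $t_{i}=n$) the associated matrix is $A=nI_{n}-J_{n}$, where $J_{n}$ is the all-ones matrix; its eigenvalues are $0$ and $n$, so $A\geq 0$ and hence $\Delta_{(n,n,\ldots,n)}$ is completely positive. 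Consequently $\boldsymbol{\mathrm{T}}\circ\Delta_{(n,n,\ldots,n)}$ is completely copositive, in particular decomposable: indeed $(1_{n}\otimes\boldsymbol{\mathrm{T}})\bigl(C_{\boldsymbol{\mathrm{T}}\circ\Delta_{(n,n,\ldots,n)}}\bigr)=C_{\Delta_{(n,n,\ldots,n)}}\geq 0$, so $C_{\boldsymbol{\mathrm{T}}\circ\Delta_{(n,n,\ldots,n)}}$ already has positive partial transpose and the decomposition criterion recalled in Section~\ref{se:atomic} applies trivially. A short computation on matrix units gives $\boldsymbol{\mathrm{T}}\circ\Delta_{(n,n,\ldots,n)}(E_{ii})=(n-1)E_{ii}$ and $\boldsymbol{\mathrm{T}}\circ\Delta_{(n,n,\ldots,n)}(E_{ij})=-E_{ji}$ for $i\neq j$, whence
$$C_{\boldsymbol{\mathrm{T}}\circ\Delta_{(n,n,\ldots,n)}}=n\sum_{i=1}^{n}E_{ii}\otimes E_{ii}-\sum_{i,j=1}^{n}E_{ij}\otimes E_{ji}.$$
Evaluating the quadratic form at $e_{1}\otimes e_{2}+e_{2}\otimes e_{1}$ (here $n\geq 2$ is used) yields $-2$, so $\boldsymbol{\mathrm{T}}\circ\Delta_{(n,n,\ldots,n)}$ is positive but not completely positive, and the entanglement witness $W_{\boldsymbol{\mathrm{T}}\circ\Delta_{(n,n,\ldots,n)}}=\tfrac{1}{n}C_{\boldsymbol{\mathrm{T}}\circ\Delta_{(n,n,\ldots,n)}}$ is well defined.

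For optimality I would prove the spanning property directly. From the displayed Choi matrix, for $\xi=\sum_{i}\xi_{i}e_{i}$ and $\eta=\sum_{j}\eta_{j}e_{j}$ one gets
$$\langle W_{\boldsymbol{\mathrm{T}}\circ\Delta_{(n,n,\ldots,n)}}(\xi\otimes\eta),\xi\otimes\eta\rangle=\frac{1}{n}\Bigl(n\sum_{i=1}^{n}\abs{\xi_{i}}^{2}\abs{\eta_{i}}^{2}-\abs{\langle\xi,\eta\rangle}^{2}\Bigr).$$
Two families of product vectors annihilate this form: first, $e_{i}\otimes e_{j}$ with $i\neq j$, since then both terms vanish; second, $\xi_{\theta}\otimes\xi_{\theta}$ with $\xi_{\theta}=\sum_{j=1}^{n}e^{i\theta_{j}}e_{j}$ as in (\ref{eq: optimalS}), since then the bracket equals $n\cdot n-n^{2}=0$. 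Under the identification of $\mathbb{C}^{n}\otimes\mathbb{C}^{n}$ with $M_{n}$ via $e_{i}\otimes e_{j}\leftrightarrow E_{ij}$, the first family contributes every $E_{ij}$ with $i\neq j$, while by the result of Ha \cite{Ha12} recalled in the proof of Theorem~\ref{th:optimal} the second family spans all symmetric matrices, in particular all $E_{ii}$. Since $\{E_{ii}\}_{i}\cup\{E_{ij}\}_{i\neq j}$ is a basis of $M_{n}$, the set $\mathcal{P}_{W_{\boldsymbol{\mathrm{T}}\circ\Delta_{(n,n,\ldots,n)}}}$ spans $\mathbb{C}^{n}\otimes\mathbb{C}^{n}$, so $W_{\boldsymbol{\mathrm{T}}\circ\Delta_{(n,n,\ldots,n)}}$ has the spanning property and is optimal by \cite{Horodecki2000}.

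I do not expect a genuine obstacle: the argument is the Choi-matrix bookkeeping above together with the already-quoted fact that the phase vectors $\xi_{\theta}\otimes\xi_{\theta}$ span the symmetric matrices. The one point requiring attention is that, because $c=0$, the map no longer depends on $\sigma$, so one cannot reuse the sets $\mathcal{V}_{i}^{(n,\sigma)}$ from Theorem~\ref{th:optimal}; instead one uses \emph{all} of $e_{i}\otimes e_{j}$ ($i\neq j$), which is legitimate precisely because the witness vanishes on every such vector. The hypothesis $n\geq 2$ is genuinely needed: for $n=1$ the map is identically zero and the statement is vacuous, whereas for $n\geq 2$ the test vector of the first paragraph supplies the strict negativity that makes $\boldsymbol{\mathrm{T}}\circ\Delta_{(n,n,\ldots,n)}$ a non-completely-positive map.
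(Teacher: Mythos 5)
Your proof is correct and follows essentially the same route as the paper: complete positivity of $\Delta_{(n,n,\ldots,n)}$ via positivity of the Schur multiplier $nI_{n}-J_{n}$ (the paper routes this through Corollary~\ref{cor:catomic}, which rests on the same fact), and the spanning property via exactly the two families $e_{i}\otimes e_{j}$ ($i\neq j$) and $\xi_{\theta}\otimes\xi_{\theta}$ that the paper's proof uses. Your explicit verification that $\boldsymbol{\mathrm{T}}\circ\Delta_{(n,n,\ldots,n)}$ is positive but not completely positive, so that the entanglement witness is actually well defined, is a worthwhile detail the paper leaves implicit.
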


\begin{proof}
From Corollary \ref{cor:catomic}, we have that $\Delta_{(n,n,\ldots,n)}$ is completely positive. So $\boldsymbol{\mathrm{T}}\circ\Delta_{(n,n,\ldots,n)}$ is decomposable.

Let $W_{\boldsymbol{\mathrm{T}}\circ\Delta_{n}}$ be the entanglement witness associated to $\boldsymbol{\mathrm{T}}\circ\Delta_{(n,n,\ldots,n)}$. For each $i\in\{1,2,\ldots,n\}$ and $n\geq 2$, let
\begin{align*}
\mathcal{V}_{i}^{(n)}=\{e_{i}\otimes e_{j}: j\neq i, \; \text{where}\; 1\leq j\leq n\}.
\end{align*}
For $n\geq 2$, just as (\ref{eq: optimalS}), let
\begin{align*}
\mathcal{S}=\{\xi_{\theta}\otimes\xi_{\theta}:\xi_{\theta}=\sum_{j=1}^{n} e^{i\theta_{j}} e_{j}\},
\end{align*}
where $\theta_{j}$ ($j=1,2,\ldots,n$) are arbitrary real numbers. Just
as the proof of Theorem \ref{th:optimal}, it is not hard to check that the vectors in $\{\cup_{i=1}^{n}\mathcal{V}_{i}^{(n)}\}\cup \mathcal{S}$ span the whole space $\mathbb{C}^{n}\otimes \mathbb{C}^{n}$ and $\langle W_{\boldsymbol{\mathrm{T}}\circ\Delta_{n}}\xi ,\xi\rangle=0$ for each $\xi\in \{\cup_{i=1}^{n}\mathcal{V}_{i}^{(n)}\}\cup \mathcal{S}$. Thus $W_{\boldsymbol{\mathrm{T}}\circ\Delta_{n}}$ has the spanning property, and so it is optimal.
\end{proof}

\bibliographystyle{amsplain}

\end{document}